\documentclass[10pt,letterpaper]{amsart}
\usepackage{amsthm}
\usepackage{hyperref}
\usepackage{xcolor}

\title{Uniqueness of the maximal solution of the supercooled Stefan problem in 1D}
\author{Kai Hong Chau}
\address{Kai Hong Chau\\ Department of Mathematics \\ University of
  British Columbia\\ Vancouver, Canada}
\email{1415jameschau@math.ubc.ca}
\author{Young-Heon Kim}
\address{Young-Heon Kim\\ Department of Mathematics \\ University of
  British Columbia\\ Vancouver, Canada}
\email{yhkim@math.ubc.ca}
\author{Mathav Murugan}
\address{Mathav Murugan\\ Department of Mathematics \\ University of
  British Columbia\\ Vancouver, Canada}
\email{mathav@math.ubc.ca}

\keywords{} \subjclass[]{} \thanks{ 
MM is partially supported by
NSERC (RGPIN-2024-06125) and the Canada Research Chairs program (CRC-2023-00048).
YHK is
  partially supported by the Natural Sciences and Engineering Research
  Council of Canada (NSERC), with Discovery Grant RGPIN-2019-03926 and RGPIN-2025-06747, as
  well as Exploration Grant (NFRFE-2019-00944) from the New Frontiers
  in Research Fund (NFRF). YHK is also a member of the Kantorovich
  Initiative (KI), which is supported by the PIMS Research Network
  (PRN) program of the Pacific Institute for the Mathematical Sciences
  (PIMS). We thank PIMS for their generous support. Part of this work
  was completed during YHK's visit at the Korea Advanced Institute of
  Science and Technology (KAIST), and we thank them for their
  hospitality and the excellent environment. \copyright 2026 by the
      authors. All rights reserved.}

\date{\today}
\usepackage{amsmath,amsfonts,amssymb,amsthm,epsfig,epstopdf,url,array,graphicx,enumitem,quotes,hyperref}
\usepackage{scalerel,stackengine}
\usepackage{marginnote}
\usepackage{color}
\usepackage{ dsfont }
\usepackage{authblk}
\usepackage{fancyhdr}
\usepackage{tikz}
\stackMath
\newcommand\reallywidecheck[1]{
\savestack{\tmpbox}{\stretchto{
  \scaleto{
    \scalerel*[\widthof{\ensuremath{#1}}]{\kern-.6pt\bigwedge\kern-.6pt}
    {\rule[-\textheight/2]{1ex}{\textheight}}
  }{\textheight}
}{0.5ex}}
\stackon[1pt]{#1}{\scalebox{-1}{\tmpbox}}
}
\parskip 1ex

\linespread{1}
\setlength{\marginparwidth}{2cm}
\setlength{\headheight}{15pt}

\usepackage{fancyhdr} 
\fancyhf{}
\cfoot{\thepage}
\pagestyle{fancy}    

\newtheorem{theorem}{Theorem}[section]
\newtheorem{definition}[theorem]{Definition}
\newtheorem{remark}[theorem]{Remark}

\newtheorem{corollary}[theorem]{Corollary}
\newtheorem{example}[theorem]{Example}
\newtheorem{lemma}[theorem]{Lemma}

\begin{document}
\begin{abstract}
    We prove uniqueness of the maximal weak solutions to the supercooled Stefan problem in 1 dimension. This follows by showing that in 1 dimension, 
    the optimal solution of the corresponding free target optimal transport problem given in \cite{GeneralDimensions}, is independent of the choice of the cost function. Moreover, we show that the supercooled Stefan problem lacks monotonicity and $L^1$-Lipschitz stability, which are available in a similar problem considered in a previous paper \cite{freetarget}. However, in $1$ dimension, it has stability in the weak convergence of measures.
\end{abstract}

\maketitle

	\newcounter{num}
	\setcounter{num}{1}
    
\pagestyle{fancy}
\fancyhead{} 
\fancyhead[COH]{\textsc{Uniqueness of the maximal solution of the supercooled Stefan problem in 1D}}
\fancyhead[CEH]{\textsc{Kai Hong Chau, Young-Heon Kim and Mathav Murugan}}

\tableofcontents

\section{Introduction}
The supercooled Stefan problem is a free boundary problem for describing the dynamics of freezing of supercooled water. It can be  described as follows \cite{GeneralDimensions}:

Let $\eta$ be a function on $\mathbb{R}^d$ such that $-\eta$ describes the temperature of supercooled water \cite{Eisbildung,Eisbildung2}. Let $\chi_{\{\eta>0\}}$ be the unit amount of change in latent heat when water turns into ice. The supercooled Stefan problem is the following free boundary PDE:
 \begin{equation}\label{$S_t$}
 \tag{\textit{St}}
 \begin{cases}
     & (\eta-\chi_{\{\eta>0\}})_t-\dfrac{1}{2}\bigtriangleup \eta=0 \ \text{on } \mathbb{R}^d\times (0,\infty),\\ 
     &   \eta(\cdot,0)=\eta_0\in L^1(\mathbb{R}^d),\\  
& E=\lim \sup \limits_{t\to 0^+} \{\eta(\cdot,t)>0\}. 
 \end{cases}
 \end{equation}

The conditions of existence and non-existence of solution of $(\ref{$S_t$})$ have  been discussed in the literature; see e.g. \cite{GeneralDimensions,parabolic}. There is no uniqueness of the solution of $(\ref{$S_t$})$; see, e.g.  \cite[Proposition 6.3]{freetarget}. In the meantime, there is a lot of work on uniqueness of a specific type of solutions in $1$ dimension $d=1$:  For instance, on the domain $\{x\geq 0\}$, \cite{McKean-Vlasov} shows local uniqueness of McKean–Vlasov problem, by probabilistic reformation as in the approach in \cite{blow-ups} and under local assumptions on the initial data $\eta_0$. The papers \cite{kinetic,blow-ups,regularity} analyze nice local H\"older continuity or local continuous differentiability of the free boundary $\Lambda_t$ in time $t$,
and show  global existence of certain physical solutions. Moreover, \cite{1D-uniqueness} shows that given nice conditions on $\eta_0$, there is uniqueness of specific type of solutions $\eta$ of the two-sided Stefan problem.

In the present paper, we consider  uniqueness of maximal solutions of $(St)$ defined in (\ref{definition:maximal}). It should be noted that unlike the approach in \cite{kinetic,blow-ups,regularity}, we just analyze the subharmonically-ordered relationship between different primal solutions, and do not analyze any property regarding the evolution of  $\Lambda_t$.

Our approach is based on the results of \cite{GeneralDimensions}.
Assume that  $O := \{\eta_0 > 0\}$ be an open set. Then, \cite{GeneralDimensions} considered 
a stochastic optimization problem of the Brownian motion $W_t$ in the following form:
\begin{equation}\label{primal}
 P(\mu,u,O)=\inf\left\{ \left. \int_O u(x) dv(x)\right|\tau\leq \tau^O, W_{\tau}\sim v\leq \chi_{O} \text{ and } W_0\sim \mu \right\}      
\end{equation}
for each given strictly superharmonic function $u: \overline{O} \to \mathbb{R}$.
Here $\tau$ is a stopping time of the Brownian motion, and $\tau^O$ denotes the exit time with respect to the domain $O$.
Let $\eta_0\leq_{SH,O}\nu$ denotes subharmonic order (see Section~\ref{sec: notation}). In \cite[Theorem 6.5]{GeneralDimensions},  it is shown that if $\mu=\eta_0$ satisfies (\ref{($C_0$)}):
\begin{align*}\label{($C_0$)}
 \tag{$C_0$}
 &\text{There exists a constant } 0 <\delta < 1  \text{, such that } \eta_0\leq_{SH,O} \nu\\
 &\text{with respect to $O$ 
 for some $\nu$ with } \nu\leq \delta \chi_O,
\end{align*}

then optimal solution $\nu^*$ of $ P(\eta_0,u,O)$ exists and
 is a characteristic function \begin{equation}\label{primal shape}
\nu^*=\chi_{A} .    
\end{equation} 

Then, from  \cite[Theorem 7.4]{GeneralDimensions},  when $|\{\eta_0=1\}|=0$, 
there exists  a weak solution $\eta$ to $(\ref{$S_t$})$ with the initial data $\eta_0$ and the initial domain
$E = O$ (a.e.), which corresponds to the Eulerian variable associated to the optimal solution $\nu^*$ to $P(\eta_0,u,O)$. Moreover, $\eta$ is maximal in the sense of 
\cite[Definition 1.3]{GeneralDimensions}:
\begin{definition}[Maximal solution]\label{definition:maximal}
     A weak solution $\eta$ to $(\ref{$S_t$})$ is maximal if for any other weak solution $\tilde{\eta}$ with the same initial data $\eta_0$ and initial domain $\{\eta_0>0\}$,
     $$\chi_{\Sigma(\eta)}\leq_{SH} \chi_{\Sigma(\tilde{\eta})} \text{ implies } \eta=\tilde{\eta}.$$
Here,
the transition zone $\Sigma(\eta)$ is defined as the region where supercooled water turns into ice, namely, 
$$\Sigma(\eta)=\{x \ | \  \eta(x,t) = 0 \text{ for some } 0<t<\infty\}\bigcap \{\eta_0 > 0\}.$$

\end{definition}
Note that in fact, $\nu^* = \chi_{\Sigma(\eta)}$ for $\eta$ generated b $\nu^*$  \cite[Theorem 7.2]{GeneralDimensions}.

 For each fixed $u$, the optimizer $\nu^*$   of $P(\mu,u,O)$  is uniquely determined (\cite[Corollary 6.6]{GeneralDimensions}). We are interested in whether such optimal solution $\nu^*$ 
is independent of $u$, and whether the maximal solution of $(St)$ is unique. In 1 dimension, we show this affirmatively:
\begin{theorem}\label{thm:main}
    Let $O\subset \mathbb{R}$ be a bounded open set. Let $\mu=\eta_0$ be given and satisfy  (\ref{($C_0$)}). Then there exists  unique $\nu^*$ (described in $(\ref{shape})$ below), which is the optimal primal solution of $(\ref{primal})$ for all smooth superharnonic function $u$ (see $(\ref{($u$)})$).
\end{theorem}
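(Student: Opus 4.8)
The plan is to translate the whole problem into one-dimensional potential theory and to exhibit a single feasible measure that is maximal in the subharmonic order, hence simultaneously optimal for every admissible cost $u$. First I would record that in dimension one a function is superharmonic iff it is concave and subharmonic iff it is convex, so the order $\leq_{SH,O}$ is the convex order adapted to $O$. Encoding a measure $\rho$ by its potential $U^\rho(x)=\int_{\mathbb{R}}|x-y|\,d\rho(y)$, one has $\rho=\tfrac12(U^\rho)''$, the relation $\mu\leq_{SH,O}\rho$ becomes the pointwise inequality $U^\mu\leq U^\rho$ together with equality of mass and barycenter, and the density bound $\rho\leq\chi_O$ becomes $(U^\rho)''\leq 2\chi_O$. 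Since $v\leq\chi_O$ forbids atoms on $\partial O$, no mass started in one component of $O=\bigsqcup_j(a_j,b_j)$ can be stopped outside that component before exiting $O$; testing $\leq_{SH,O}$ against functions that are locally affine on each component shows that the per-component mass $m_j$ and barycenter $\bar x_j$ of $\mu$ are preserved. Thus the feasible set of $(\ref{primal})$ decouples over components, and on each interval it is
\[
K_j=\{v:\ v\leq\chi_{(a_j,b_j)},\ \mu|_{(a_j,b_j)}\leq_{cx}v\},
\]
all of whose elements share the mass $m_j$ and barycenter $\bar x_j$. Because $u$ is concave, $v_1\leq_{cx}v_2$ forces $\int u\,dv_1\geq\int u\,dv_2$; hence minimizing $\int_O u\,dv$ amounts to finding the convex-order \emph{largest} element of the feasible set, and if such an element exists it is optimal for \emph{every} concave $u$ at once.

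Second (the crux), I would show that this convex-order maximum exists, is unique, and is a characteristic function. On $(a_j,b_j)$ define the edge-saturated set $A_j=(a_j,a_j+s_j)\cup(b_j-t_j,b_j)$ with $s_j+t_j=m_j$ and $s_j,t_j\geq0$ chosen so that $\chi_{A_j}$ has barycenter $\bar x_j$; such a splitting exists and is unique whenever $\bar x_j\in(a_j+m_j/2,\,b_j-m_j/2)$. To see that $\chi_{A_j}\geq_{cx}v$ for every $v\in K_j$, set $h=U^{\chi_{A_j}}-U^{v}$. Both potentials are convex, agree in value at $a_j,b_j$ (equal mass and barycenter) and have matching one-sided slopes there, so $h(a_j)=h(b_j)=0$ and $h'(a_j^+)=h'(b_j^-)=0$. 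Moreover $h''=2(\chi_{A_j}-v)$ is $\geq0$ on the two end-slabs (there $\chi_{A_j}=1\geq v$) and $\leq0$ on the central gap (there $\chi_{A_j}=0\leq v$), so $h$ is convex--concave--convex. Starting from $h=h'=0$ at $a_j$, convexity gives $h\geq0$ on the left slab; by the same argument at $b_j$, $h\geq0$ on the right slab; and a concave function on the gap with nonnegative boundary values stays nonnegative. Hence $h\geq0$, i.e. $U^{v}\leq U^{\chi_{A_j}}$, which is exactly $v\leq_{cx}\chi_{A_j}$. This sign argument --- together with the fact that the maximizing configuration is the \emph{same} at every point --- is the main obstacle, since a convex-order maximum need not exist for a general constrained family.

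Third, I would assemble the pieces and extract the $u$-independence. Condition (\ref{($C_0$)}) provides $\nu$ with $\mu\leq_{SH,O}\nu\leq\delta\chi_O$; on each component $\nu|_{(a_j,b_j)}$ has density $\leq\delta<1$, mass $m_j$ and barycenter $\bar x_j$, which forces $m_j<b_j-a_j$ and $\bar x_j\in(a_j+m_j/2,\,b_j-m_j/2)$, so each $\chi_{A_j}$ exists. Applying the lemma of the previous paragraph to $\nu$ (which has density $\leq1$) gives $\nu\leq_{cx}\chi_{A_j}$, whence $\mu\leq_{cx}\chi_{A_j}$ and $\chi_{A_j}\in K_j$. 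Therefore $\nu^*=\chi_A$ with $A=\bigcup_jA_j$ is feasible and is the convex-order top of the feasible set; that it is attained by a stopping time $\tau\leq\tau^O$ follows from the Skorokhod-type embedding already employed in \cite{GeneralDimensions}. By the first paragraph $\nu^*$ minimizes $\int_O u\,dv$ for every smooth strictly superharmonic $u$, and by the uniqueness of the optimizer for fixed $u$ (\cite[Corollary 6.6]{GeneralDimensions}) it is \emph{the} optimizer; in particular it is independent of $u$. This $\nu^*=\chi_A$ is the measure $(\ref{shape})$, which completes the proof once the extremal lemma is established.
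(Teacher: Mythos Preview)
Your proof is correct and shares the paper's overall architecture: decompose $O$ into its component intervals, pass to one-dimensional potentials, and show that the edge-saturated measure $\chi_{A_j}$ is the $\leq_{SH,O}$-maximum of the admissible set on each component, hence optimal for every concave $u$, with uniqueness for fixed $u$ from \cite[Corollary 6.6]{GeneralDimensions} closing the argument. The difference lies entirely in how the potential comparison $U^{v}\leq U^{\chi_{A_j}}$ is established. The paper first proves it only for $v=\chi_{(a,b)}$ by explicitly computing $[U^{\chi_{A}}]'$ and $[U^{\chi_{(a,b)}}]'$ and running a nine-case analysis to locate the unique critical point of the difference (Appendix~\ref{sec:appendix}); it then extends to finite unions by an inductive stopping-time construction, to general $\chi_E$ by approximation, and finally bootstraps to arbitrary admissible $v$ by invoking that any optimizer is a characteristic function. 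Your route bypasses all of this: observing that $h=U^{\chi_{A_j}}-U^{v}$ satisfies $h=h'=0$ at both endpoints and that $h''=2(\chi_{A_j}-v)$ has the sign pattern $\geq0$ on the two end-slabs and $\leq0$ on the central gap, you get $h\geq0$ on the slabs by convexity from zero initial data, and then on the gap by concavity with nonnegative boundary values. This handles every $v\leq\chi_{(a_j,b_j)}$ with the correct mass and barycenter in one stroke, so no induction, approximation, or appeal to the characteristic-function form of optimizers is needed. What the paper's longer route buys is an explicit, constructive stopping time (the $\oplus$ gluing in Step~2), whereas you rely on the potential-to-embedding direction of Lemma~\ref{lemma:potentials}; both are legitimate, and your use of $(C_0)$ to produce the intermediate $\nu$ with bounded density neatly sidesteps any regularity assumption on $\mu$ itself when invoking that lemma.
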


Our proof in Section~\ref{sec:main proof} heavily relies on the fact that every open set in $1$D can be expressed as a countable disjoint union of open intervals. 
 Based on Theorem \ref{thm:main}, we have uniqueness of maximal solutions (in $1$D)  as a corollary:

\begin{theorem}\label{thm:maximal}
Given the setting of \cite[Theorem 7.4]{GeneralDimensions}, in $1$ dimension, given initial data $(\eta_0,O\stackrel{\text{a.e.}}{=} \{\eta_0>0\})$, the maximal solution $\eta$ of $(\ref{$S_t$})$ is unique, with $\chi_{\Sigma(\eta)}=\nu^*$ and $\nu^*$ is the unique optimal primal solution from Theorem \ref{thm:main}.
\end{theorem}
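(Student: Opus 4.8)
The plan is to derive the statement from \cite[Theorem~7.4]{GeneralDimensions} together with the $u$-independence proved in Theorem~\ref{thm:main}. Under the standing hypotheses, \cite[Theorem~7.4]{GeneralDimensions} already produces \emph{one} maximal weak solution $\eta^*$ for the data $(\eta_0,O)$, and \cite[Theorem~7.2]{GeneralDimensions} identifies its transition zone as $\chi_{\Sigma(\eta^*)}=\nu^*$, an optimizer of $P(\eta_0,u,O)$. Existence of a maximal solution is therefore already in hand, and the whole task is to show that \emph{any} maximal weak solution $\tilde\eta$ with the same data coincides with $\eta^*$. The mechanism is that $\nu^*$ is not merely \emph{a} maximal element but the \emph{largest} feasible measure in the subharmonic order, which is exactly the order-theoretic content of the simultaneous optimality asserted in Theorem~\ref{thm:main}.

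First I would reinterpret ``optimal against every cost'' as ``dominant in the subharmonic order''. Fix a maximal weak solution $\tilde\eta$. By the correspondence between weak solutions and stopping times in \cite{GeneralDimensions}, its transition zone $\chi_{\Sigma(\tilde\eta)}$ is a feasible competitor for $(\ref{primal})$: it is supported in $O=\{\eta_0>0\}$, so $\chi_{\Sigma(\tilde\eta)}\le\chi_O$, and it is realized by a stopping time $\tau\le\tau^O$ of the Brownian motion started from $\eta_0$. By Theorem~\ref{thm:main} the single measure $\nu^*$ minimizes $\int_O u\,dv$ over all feasible $v$ simultaneously for every smooth superharmonic $u$; testing this against $v=\chi_{\Sigma(\tilde\eta)}$ yields $\int_O u\,d\nu^*\le\int_O u\,d\chi_{\Sigma(\tilde\eta)}$ for all such $u$. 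Since these inequalities over superharmonic test functions are precisely the defining relations of the subharmonic order, we obtain $\chi_{\Sigma(\tilde\eta)}\le_{SH}\nu^*=\chi_{\Sigma(\eta^*)}$. Applying Definition~\ref{definition:maximal} to $\tilde\eta$ with comparison solution $\eta^*$, the relation $\chi_{\Sigma(\tilde\eta)}\le_{SH}\chi_{\Sigma(\eta^*)}$ forces $\tilde\eta=\eta^*$; hence the maximal solution is unique, with transition zone $\nu^*$.

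The hard part is conceptual rather than computational: Definition~\ref{definition:maximal} only guarantees that a maximal solution has nothing strictly above it, and an abstract partial order may well contain many maximal elements with distinct transition zones. The crux is thus to upgrade ``maximal element'' to ``maximum element'', i.e.\ to exhibit one feasible measure dominating all transition zones, and this upgrade is precisely what the $u$-independence of $\nu^*$ in Theorem~\ref{thm:main} delivers. Two technical points remain to be verified to make the sketch rigorous: that smooth superharmonic functions form a detecting family for the subharmonic order on the bounded set $O$ (so that simultaneous optimality over smooth $u$ genuinely yields $\le_{SH}$, e.g.\ by approximating a general superharmonic function by smooth ones), and that the transition zone of an \emph{arbitrary} weak solution really is feasible for $(\ref{primal})$ in the sense required by the admissibility condition $(C_0)$ and the stopping-time constraints of \cite{GeneralDimensions}.
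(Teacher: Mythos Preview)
Your proposal is correct and follows essentially the same skeleton as the paper: take an arbitrary maximal solution $\tilde\eta$, use \cite[Theorem~7.2]{GeneralDimensions} to see that $\chi_{\Sigma(\tilde\eta)}\in\mathcal A_{\eta_0,O}$, produce the comparison solution $\eta^*$ via \cite[Theorem~7.4]{GeneralDimensions} with $\chi_{\Sigma(\eta^*)}=\nu^*$, establish $\chi_{\Sigma(\tilde\eta)}\le_{SH}\nu^*$, and conclude $\tilde\eta=\eta^*$ from Definition~\ref{definition:maximal}.

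The one genuine difference is in how the key ordering $\chi_{\Sigma(\tilde\eta)}\le_{SH}\nu^*$ is obtained. You argue indirectly: Theorem~\ref{thm:main} gives $\int u\,d\nu^*\le\int u\,d\chi_{\Sigma(\tilde\eta)}$ for all smooth superharmonic $u$, and then you invoke a ``detecting family'' principle to pass to $\le_{SH}$. This is valid, but note that in the paper's framework it is not the \emph{definition} of $\le_{SH}$ (which is via stopping times) but rather a consequence of Lemma~\ref{lemma:potentials} and Corollary~\ref{cor:unique}; your first acknowledged gap is exactly the implication $(2)\Rightarrow(1)$ of Corollary~\ref{cor:unique}, proved there by testing against mollified Newtonian kernels. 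The paper's own proof is slightly more direct: since $\chi_{\Sigma(\tilde\eta)}$ is a characteristic function with the same mass and first moment as $\eta_0$, one can feed it as the initial datum into Theorem~\ref{thm:potentials} and read off $\chi_{\Sigma(\tilde\eta)}\le_{SH,O}\nu^*$ immediately, bypassing the test-function route. Your second acknowledged gap (feasibility of $\chi_{\Sigma(\tilde\eta)}$) is exactly \cite[Theorem~7.2]{GeneralDimensions}, as the paper cites.
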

 One can also prove (see Section~\ref{sec:stability}) that the unique maximal solution in $1$D case, is stable in weak convergence,  while it does not enjoy a stronger $L^1$-Lipschitz stability from monotonicity property in the usual stable Stefan problem case \cite{freetarget}.

\begin{remark}
We remark that while we were preparing this article we found that \cite{unique-supercooled} obtained a similar result in 1D case along with other results in general dimensions, including dependence of $\nu^*$ on the choice of superharnonic function in dimension $\ge 2$.
\end{remark}

\section{Notation}\label{sec: notation}
The following  set up the notation used in the paper. Most of them are based on \cite{GeneralDimensions,freetarget}:
\begin{itemize}
    \item $O$ is a bounded open set in $\mathbb{R}^d$, with locally Lipschitz boundary.
  \item $u$ is a function satisfying the following condition:
  \begin{equation}\label{($u$)}
  u:\overline{O}\to \mathbb{R}, \text{ a bounded, smooth function on  $\overline{O}$ such that }    \bigtriangleup u<0.
  \end{equation}

   \item $\Omega=C(\mathbb{R}_{\geq 0},\mathbb{R}^d)$, the space of continuous curves.

\item $\mathbb{P}^{\zeta}=$ the Wiener measure on $\Omega$ with initial distribution $\zeta$; this means for our Brownian motion $W_t$, $W_0\sim \zeta$.
   
\item $(\Omega,\mathcal{F},\{\mathcal{F}_t\}_{t\geq 0},\mathbb{P}^{\zeta})$, with filtered probability space over $\Omega$, the filtered $\sigma$-algebra $\mathcal{F}$ and the probability measure $\mathbb{P}^{\zeta}$.

 \item  $\tau^A$ is the exit time of $A\subset \mathbb{R}^d$, i.e. In the setting $(\Omega,\mathcal{F},\{\mathcal{F}_t\}_{t\geq 0},\mathbb{P}^{\zeta})$, for all $\omega \in \Omega$ , $\tau^A(\omega)=\inf \{t>0|W_t(\omega)\in A^c\}$ .

  \item $M(\mathbb{R}^+)=$ collection of probability measures on $\mathbb{R}^+$.
   
     \item $\tau$ is a Randomized stopping time described as follows:  $\tau:\Omega\to M(\mathbb{R}^+)$ such that for all $t\geq 0$, 
     $\omega \to \tau(\omega)([0,t])$ is $\mathcal{F}_t-$measurable.

     \item $k=$ mass of a measure $\zeta$: $k=\int  d\zeta(x) =\zeta(X)$. 
    \item $\beta$: Mean/first moment of a measure $\zeta$: $\beta=\int  x \, d \zeta(x) $.

     \item $\leq_{SH}$ represents subharmonically ordered-relationship, which is described as follows: $\mu\leq_{SH}\nu$ if there is a (randomized) stopping time $\tau$ such that $W_0\sim \mu$, $W_{\tau}\sim v$, $\mathbb{E}(\tau)<\infty$. 

       \item $\leq_{SH,O}$: represents subharmonically ordered-relationship with respect to $U$, which is described as follows: $\mu\leq_{SH,O}\nu$ if there is a (randomized) stopping time $\tau$ such that $\tau\leq \tau^O$, $W_0\sim \mu$, $W_{\tau}\sim v$, $\mathbb{E}(\tau)<\infty$.

\end{itemize}

\section{Newtonian potentials}
To begin with, we prove a lemma regarding Newtonian potentials and subharmonic order. The results in this section hold in general dimensions.

 Newtonian potentials  on $\mathbb{R}^d$, are defined as 
$$k(y):=\begin{cases}
    -\dfrac{1}{2}|y| & \ \text{$d=1$,}\\
    -2\pi \log{|y|} & \ \text{$d=2$,}\\
    \dfrac{1}{d(d-2)\omega_d}|y|^{2-d}& \ \text{otherwise.}
\end{cases}$$
 where $\omega_d$ is the volume of the unit ball in $\mathbb{R}^d$. Note that this satisfies $$\Delta N(y)=-\delta_0.$$
For any finite Borel measure $\mu$, as in \cite{potentials}
one can  define the potential $U^{\mu}$ of $\mu$ by $$U^{\mu}(y)=\int k(y-x) \ d\mu(x).$$
 The function $U^{\mu}$ is a superharmonic on $\mathbb{R}^d$ as $$\Delta U^{\mu}(y)=-\mu(y)\leq 0 \ \text{for all $y\in \mathbb{R}^d$}.$$

Denote  
$$\mathcal{A}_{\mu,O}=\{\nu:\mu\leq_{SH,O} \nu \text{, } \nu \leq \chi_{O}\}$$
with a partial order relation $\leq_{SH,O}$. From \cite{GeneralDimensions,embedding,PDESkorokhod}, if $\mu \leq_{SH, O} \nu$, then there exists a stopping time $\tau^*\leq \tau^O$ with $\mathbb{E}(\tau^*)<\infty$ such that $W_0\sim \mu$ and $W_{\tau^*} \sim \nu$. 
So $\mathcal{A}_{\mu,O}$ is the admissible set of  $P(\mu,u,O)$,  that is,
$$\mathcal{A}_{\mu,O}=\{\nu:\nu\leq \chi_{O}\text{,  $\exists$ stopping time $\tau\leq \tau^O$ s.t.   } W_0\sim \mu,  W_{\tau}\sim \nu \}.$$

In the meantime, \cite{potentials} suggests a way to characterize subharmonic order by using Newtonian potentials.
\begin{lemma}\label{lemma:potentials} Suppose $O$ is a bounded, convex open set such that \begin{enumerate}
    \item $\mu,\nu$ have bounded densities, same mass and first moment, and are compactly supported on $\overline{O}$, and 
    \item with $W_0\sim \mu$, $\tau^{O}=\tau^{\overline{O}}$ almost surely.
    
    \end{enumerate} 
Then 
      $$ \mu \leq_{SH, O} \nu  \ \ 
      (\Longleftrightarrow) \ U^{\mu}(x)\geq U^{\nu}(x) \ \text{for all }x\in \mathbb{R}^d. $$
\end{lemma}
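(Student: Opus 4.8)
The plan is to prove the two implications separately, treating $(\Rightarrow)$ as the routine direction via stochastic calculus and $(\Leftarrow)$ as the substantive one via a potential-theoretic (balayage / Skorokhod) construction. Throughout, conditions (1)--(2) play three distinct roles: equal mass and first moment guarantee that $U^\mu-U^\nu$ is well behaved at infinity (indeed that it vanishes there, and in $d=1$ vanishes outside the convex hull of the supports) and that $\mathbb{E}(\tau)<\infty$ is consistent with the martingale property of $W$; bounded densities and compact support supply the integrability needed for optional stopping; and convexity together with (2) is what will let me confine the embedding to $\overline{O}$.

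For the forward direction, assume $\mu\leq_{SH,O}\nu$, so there is a stopping time $\tau\leq\tau^O$ with $W_0\sim\mu$, $W_\tau\sim\nu$ and $\mathbb{E}(\tau)<\infty$. Fix $z\in\mathbb{R}^d$ and consider the superharmonic function $x\mapsto U^{\delta_z}(x)$, i.e. the Newtonian kernel centered at $z$. Since superharmonic functions compose with Brownian motion to give supermartingales (after a standard truncation to control the singularity and lower bound at $z$), optional stopping gives $\mathbb{E}\big[U^{\delta_z}(W_\tau)\big]\leq \mathbb{E}\big[U^{\delta_z}(W_0)\big]$, that is $U^\nu(z)\leq U^\mu(z)$. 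I would make this quantitative through the expected-occupation-measure identity: Itô's formula yields $\int f\,d\nu-\int f\,d\mu=\tfrac12\,\mathbb{E}\!\int_0^\tau\Delta f(W_s)\,ds$ for suitable $f$, and taking $f=U^{\delta_z}$ collapses the right-hand side to minus one-half the expected local time at $z$, which is $\leq 0$. The hypotheses (bounded densities, compact support in $\overline{O}$, and $\mathbb{E}(\tau)<\infty$) ensure the local-martingale part is a true martingale so that its expectation vanishes, completing $U^\mu\geq U^\nu$.

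For the converse, assume $U^\mu\geq U^\nu$ everywhere. Because $\mu,\nu$ share mass and first moment and are compactly supported on $\overline{O}$, the nonnegative difference $U^\mu-U^\nu$ is a bounded potential vanishing at infinity. I would then invoke the classical equivalence between the potential order and the balayage / subharmonic order established in \cite{potentials,embedding,PDESkorokhod}: under $U^\mu\geq U^\nu$ with equal mass there exists a (randomized) stopping time $\tau$ with $W_0\sim\mu$, $W_\tau\sim\nu$, and $\mathbb{E}(\tau)<\infty$ (finiteness coming from equal first moments and boundedness of the supports, via Itô applied to $|x|^2$). Concretely I would realize $\tau$ through a Rost-type obstacle-problem / barrier construction, whose expected occupation measure $\rho\geq 0$ satisfies $\tfrac12\Delta\rho=\nu-\mu$ and is carried by the contact set of $U^\mu-U^\nu$.

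The remaining and principal step is to upgrade this unconstrained embedding to one with $\tau\leq\tau^O$, and I expect this confinement to be the main obstacle. Here convexity of $O$ is essential: the relevant occupation of the balayage embedding is concentrated in the convex hull of $\mathrm{supp}\,\mu\cup\mathrm{supp}\,\nu\subseteq\overline{O}$, so one can run the embedding so that the Brownian motion never needs to leave $\overline{O}$ before being stopped, giving $\tau\leq\tau^{\overline{O}}$. Condition (2), namely $\tau^O=\tau^{\overline{O}}$ almost surely under $W_0\sim\mu$, then converts this into $\tau\leq\tau^O$, which is exactly $\mu\leq_{SH,O}\nu$. The delicate point is justifying that the embedding can be confined to $\overline{O}$ in general dimension — that the filled region and the supermartingale comparisons respect $\overline{O}$ — and it is precisely convexity together with the boundary-regularity hypothesis (2) that makes the passage from $\tau^{\overline{O}}$ to $\tau^O$ legitimate.
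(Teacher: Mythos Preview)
Your plan is correct and follows essentially the same route as the paper: $(\Rightarrow)$ via superharmonicity of the Newtonian kernel and optional stopping, and $(\Leftarrow)$ by first checking $U^\mu-U^\nu\to 0$ at infinity (from equal mass and first moment, with the dimension-dependent case split you allude to), then invoking \cite{potentials} for an unconstrained Skorokhod embedding, and finally confining to $O$ via convexity and hypothesis~(2). The only place the paper is sharper is the confinement step: rather than arguing through the occupation measure, it cites \cite[Remark~4.2]{freetarget} to assert that $\mathrm{supp}\,\mu_t\subset\mathrm{conv}(\mathrm{supp}\,\nu)\subset\overline{O}$ for the stopped distributions $\mu_t=\mathrm{Law}(W_{\tau\wedge t})$, which directly yields $\tau\le\tau^{\overline{O}}$; your phrasing in terms of occupation being concentrated in the convex hull is a bit loose for that conclusion, but the idea is the same.
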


\begin{proof}
      $(\Longrightarrow))$ This comes from the fact that potential function is superharmonic and  \cite[Lemma 2.4]{GeneralDimensions}.

        $(\Longleftarrow)$ Firstly, as $\mu,\nu$ are compactly supported finite measures on $\overline{O}$ with bounded densities, from \cite{potentials}, $U^{\mu},U^{\nu}$ are finite and continuous on $\mathbb{R}^d$. Then we have to verify that  $\lim \limits_{|x|\to +\infty} [U^{\mu}(x)-U^{\nu}(x)]=0$.
           \begin{enumerate}[label=\textbf{Case  \arabic*.},start=1]
           \item $d\geq 3$: Trivial from \cite{potentials}.
\item $d=1$:
For $|x|\geq \sup \limits_{z\in \overline{O}}|z|$,
\begin{align*}
    U^{\mu}(x)-U^{\nu}(x)
    &=\begin{cases}
  -\frac{1}{2}\int_{\overline{O}}[x-y] [\mu(y)-\nu(y)] \ dy \ \text{when } x\geq 0,\\  
   -\frac{1}{2}\int_{\overline{O}}[y-x] [\mu(y)-\nu(y)] \ dy \ \text{when } x<0
    \end{cases}\\
    &=0.
\end{align*}
The last equality is true since $\mu,\nu$ have same mass and first moment.
\item $d=2$: For large $|x|\gg 1$, when $\zeta=\mu$ or $\nu$, 
        $$\int_{\overline{O}}-2\pi \log{(|x|+\sup \limits_{z\in \overline{O}}|z|) }\zeta(y)  dy\leq U^{\zeta}(x)\leq \int_{\overline{O}}-2\pi \log{(|x|-\sup \limits_{z\in \overline{O}}|z|) }\zeta(y)  dy$$
       
Hence with $\int_{\overline{O}} \mu(y) \ dy=\int_{\overline{O}} \nu(y) \ dy$, 
\begin{align*}
      &\limsup_{|x|\to \infty}\,  [U^{\nu}(x)-U^{\mu}(x)]\\
      &\leq  \limsup_{|x|\to \infty}-2\pi \left[ \log{(|x|-\sup \limits_{z\in \overline{O}}|z|) }-\log{(|x|+\sup \limits_{z\in \overline{O}}|z|) } \right]  \int_{\overline{O}} \mu(y) \ dy\\
        &= \limsup \limits_{|x|\to \infty}-2\pi \log\left[1-\dfrac{2\sup \limits_{z\in \overline{O}}|z|}{|x|+\sup \limits_{z\in \overline{O}}|z| } \right] \int_{\overline{O}} \mu(y) \ dy\\
        &= 0 .
 \end{align*}
 
As $\mu$ and $\nu$ in the argument above can be interchanged, $$\liminf_{|x|\to \infty} \, [U^{\nu}(x)-U^{\mu}(x)]\geq 0.$$

 Therefore, $$  \lim \limits_{|x|\to +\infty}[U^{\nu}(y)-U^{\mu}(y)]=0.$$
           \end{enumerate}
By this and \cite{potentials}, we have  $\mu\leq_{SH}\nu$,  that there is a stopping time $\tau$ with $W_0 \sim \mu$,  $W_\tau \sim \nu$ and $\mathbb{E}(\tau)<\infty$. We now take care of the condition $\tau \le \tau^O$.
        
Denote $\mu_t$ as the distribution of $W_{\min{\{\tau,t\}}}$. From \cite[Remark 4.2]{freetarget}, for all $0\leq t< +\infty$, $supp(\mu_t)$ is contained in the convex hull of $supp(\nu)$. Since $\nu$ is compactly supported on $\overline{O}$ from our assumption $(2)$, and $\overline{O}$ is convex from the convexity of $O$, $supp(\mu_t)\subset \overline{O}$ , implying $\min\{\tau,t\}\leq \tau^{\overline{O}}$ for all $0\leq t<\infty$, which results in  $\tau\leq \tau^{\overline{O}}$ almost surely. By our assumption $(1)$ ,$\tau^{\overline{O}}=\tau^{O}$ almost surely, hence $\tau\leq \tau^{O}$ almost surely. To conclude, $\mu \leq_{SH,O}\nu$.
\end{proof}

\begin{remark} Note that $\tau^{\overline{O}}=\tau^O$ almost surely  for nice open domains $O$, e.g. bounded connected open sets with smooth boundary.

In general, the assumptions $\tau^{\overline{O}}=\tau^O$ and $\bar{O}$ is convex are necessary. For example, take $O=(-1,0)\bigcup (0,1)$, $\mu=\chi_{A}$ and $\nu=\chi_B$, where $A=(-0.5,0)\cup (0,0.5)$ and $B=(-1,-0.5)\cup (0,5,1)$. It is true that $W_{\tau^{O}}$ and $ W_{\tau^{\overline{O}}}$ do not have the same distribution. Upon calculation, one can show that $U^{\mu}\geq U^{\nu}$ on $\mathbb{R}$. Nevertheless, considering the conservation of mass and first moment  on $O=(-1,0)\bigcup (0,1)$, there is no stopping time $\tau\leq \tau^O$ with $\mathbb{E}(\tau)<\infty$ such that $W_0\sim \mu$ and $W_{\tau}\sim \nu$. The boundary point $0$ prohibits the flow of particles between $(-1,0)$ and $(0,1)$. Besides, $O$ is not convex.
\end{remark}

From Lemma~\ref{lemma:potentials}, we also derive that under the same assumptions (1) and (2) of that lemma, we have
$\mu =\nu$ if and only if $U^\mu = U^\nu$, since $\mu \leq_{SH,O}  \nu \leq_{SH,O}\mu$ implies $\mu =\nu$.
We also have the following corollary:

\begin{corollary}\label{cor:unique}
Given the conditions on $O$ in Lemma~\ref{lemma:potentials}, the following are equivalent:
  \begin{enumerate}[label=(\arabic*),start=1]
   \item $(\mathcal{A}_{\mu,O},\leq_{SH,O})$ has a unique maximal element in the partial ordering $\leq_{SH,O}$.
      \item The optimal solution $\nu^*$ for $P(\mu,u,O)$ is independent of $u$ satisfying (\ref{($u$)}).
      \item For all $\nu\in \mathcal{A}_{\mu,O}$, $\ U^{\nu}(x)\geq U^{\nu^*}(x) \ \text{for all }x\in \mathbb{R}^d$.
  \end{enumerate}
\end{corollary}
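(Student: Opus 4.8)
The plan is to establish the three equivalences cyclically, as $(1)\Rightarrow(2)\Rightarrow(3)\Rightarrow(1)$, leaning on two structural facts throughout. First, every competitor $\nu\in\mathcal{A}_{\mu,O}$ shares its mass and first moment with $\mu$ (both are conserved along the embedding, since $W_{\tau\wedge t}$ is a bounded martingale and $\mathbb{E}(\tau)<\infty$) and has density bounded by $1$ with support in $\overline{O}$; hence Lemma~\ref{lemma:potentials} applies to \emph{any} pair $\nu_1,\nu_2\in\mathcal{A}_{\mu,O}$ and identifies the order $\leq_{SH,O}$ on $\mathcal{A}_{\mu,O}$ with the reverse pointwise order of Newtonian potentials. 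Second, whenever $\nu'\leq_{SH,O}\nu$ via a stopping time $\tau\leq\tau^O$ with $\mathbb{E}(\tau)<\infty$, the Dynkin/It\^o identity gives $\int u\,d\nu-\int u\,d\nu'=\mathbb{E}\big[\int_0^\tau \tfrac12\Delta u(W_s)\,ds\big]$; because $\Delta u<0$ is bounded away from $0$ on the compact set $\overline{O}$ and $W_s\in\overline{O}$ for $s<\tau$, this quantity is strictly negative unless $\tau=0$ a.s., i.e. unless $\nu=\nu'$.

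For $(1)\Rightarrow(2)$ I would first record that, for any fixed $u$ satisfying (\ref{($u$)}), the optimizer $\nu^*_u$ (which exists and is unique by \cite[Theorem 6.5, Corollary 6.6]{GeneralDimensions}) is a \emph{maximal} element of $(\mathcal{A}_{\mu,O},\leq_{SH,O})$: if some $\nu\neq\nu^*_u$ in $\mathcal{A}_{\mu,O}$ satisfied $\nu^*_u\leq_{SH,O}\nu$, the It\^o identity above would force $\int u\,d\nu<\int u\,d\nu^*_u$, contradicting optimality. Consequently, if the maximal element of the poset is unique, every $\nu^*_u$ must coincide with it, so the optimizer is independent of $u$, which is $(2)$.

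For $(2)\Rightarrow(3)$, let $\nu^*$ be the common optimizer. For each $\nu\in\mathcal{A}_{\mu,O}$ and each smooth strictly superharmonic $u$, optimality gives $\int u\,d\nu^*\le\int u\,d\nu$. Replacing a general smooth superharmonic $u$ by $u-\epsilon|x|^2$ and letting $\epsilon\to0$ extends this to all smooth superharmonic $u$, and mollification extends it further to every superharmonic function (the mollifications of a superharmonic function are smooth, superharmonic, and increase to it). Applying the resulting inequality to the superharmonic functions $y\mapsto k(x-y)$, one for each fixed $x$, yields precisely $U^{\nu^*}(x)\le U^{\nu}(x)$ for all $x$, which is $(3)$. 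The implication $(3)\Rightarrow(1)$ is then immediate from Lemma~\ref{lemma:potentials}: the inequality $U^{\nu}\ge U^{\nu^*}$ for all $\nu\in\mathcal{A}_{\mu,O}$ reads $\nu\leq_{SH,O}\nu^*$ for all $\nu$, so $\nu^*$ is the greatest element of the poset and hence its unique maximal element.

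I expect the main obstacle to be $(2)\Rightarrow(3)$: converting optimality, which is tested only against \emph{smooth, strictly} superharmonic cost functions, into pointwise domination of potentials requires approximating the singular, merely superharmonic Newtonian kernels $k(x-\cdot)$ by smooth superharmonic functions and justifying passage to the limit in the integrals (using the uniform density bound, the compact support in $\overline{O}$, and the local integrability of $k$). A secondary point, used silently above, is verifying that the hypotheses of Lemma~\ref{lemma:potentials} genuinely hold for arbitrary pairs in $\mathcal{A}_{\mu,O}$, which is exactly where the conservation of mass and barycenter along the embedding is needed.
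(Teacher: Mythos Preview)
Your argument is correct and follows essentially the same route as the paper: the equivalence of $(1)$ and $(3)$ via Lemma~\ref{lemma:potentials}, and the passage from $(2)$ to potential domination by testing optimality against mollified Newtonian kernels $k_y\ast\eta_\epsilon$ perturbed by $-C|x|^2$, are exactly what the paper does. The only notable difference is in $(1)\Rightarrow(2)$: the paper reads ``unique maximal element'' as \emph{greatest} element and simply invokes $\nu\leq_{SH,O}\nu^*\Rightarrow\int u\,d\nu\ge\int u\,d\nu^*$ for superharmonic $u$, whereas you take the slightly longer (but equally valid, and arguably more robust) path of showing via the Dynkin identity that each $\nu^*_u$ is itself maximal and hence must coincide with the unique maximal element.
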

\begin{proof}
Lemma~\ref{lemma:potentials} shows that $(1)$ and $(3)$ are equivalent. It remains to show that $(1)$ and $(2)$ are equivalent.

    $(1)\Longrightarrow (2)$: Suppose $(\mathcal{A}_{\mu,O},\leq_{SH,O})$ has a unique maximal element, namely, $\nu^*\in \mathcal{A}_{\mu,O}$ such that for all $\nu\in \mathcal{A}_{\mu,O}$, we have $\nu\leq_{SH,O}\nu^*$. By the definition of subharmonic order and superharmonicity of $u$,
    \cite[Lemma 2.4]{GeneralDimensions} gives $\int_Ou(x) \ d\nu(x)\geq \int_Ou(x) \ d\nu^*(x)$. Hence 
$P(\mu,u,O)\geq \int_Ou(x) \ d\nu^*(x)$.
Moreover, as $\nu^*\in \mathcal{A}_{\mu,O} $, $P(\mu,u,O)\leq \int_Ou(x) \ d\nu^*(x)$.
Hence $P(\mu,u,O)= \int_Ou(x) \ d\nu^*(x)$ for all $u$ satisfying (\ref{($u$)}), and $\nu^*$ is  the optimal  solution for $P(\mu,u,O)$.

$(2)\Longrightarrow (1)$:
Suppose the optimal solution $\nu^*$ for $P(\mu,u,O)$ is independent of $u$. Then $P(\mu,u,O)=\int_Ou(x)\nu^*(x) \ dx$ for all $u$ satisfying (\ref{($u$)}). Let $\nu_1\in \mathcal{A}_{\mu,O}$, $y\in \mathbb{R}^d$. Note that for $k_y(x)=k(y-x)$, as $k_y\in L^1_{loc}(\mathbb{R}^d)$, there exists a sequence of mollifier $\{\eta_{\epsilon}\}_{\epsilon>0}$ such that $k_y\ast \eta_{\epsilon}\stackrel{L^1({\overline{O}})}{\to} k_y$ and $|k_y|\ast \eta_{\epsilon}\stackrel{L^1({\overline{O}})}{\to} |k_y|$.
Moreover,  $k_y$ is superharmonic on $O$ and $k_y\ast \eta_{\epsilon}$ is smooth superharmonic function on $O$. For all $C>0$, there exists $D>0$ such that  $u_{y,\epsilon,C,D}=k_y\ast \eta_{\epsilon}-C|x|^2+D$ is a non-negative bounded, smooth function with $\Delta y_{y,\epsilon,C,D}<0$ on $O$. By definition of optimal solutions of $P(\mu,u,O)$,
we have
$\int_O u_{y,\epsilon,C,D} (x) \ \nu^*(x) \ dx \leq  \int_O u_{y,\epsilon,C,D} (x) \nu_1(x) \ dx$. With $\nu^*(\overline{O})=\nu_1(\overline{O})$, we obtain 

\begin{align*}
\int_{\overline{O}} [k_y\ast \eta_{\epsilon}(x)-C|x|^2] \nu^*(x) \ dx&\leq  \int_{\overline{O}} [k_y\ast \eta_{\epsilon}(x)-C|x|^2] \nu_1(x) \ dx. \\
\text{By taking $C\downarrow 0$, } \int_{\overline{O}} k_y\ast \eta_{\epsilon}(x) \nu^*(x) \ dx&\leq \int_{\overline{O}} k_y\ast \eta_{\epsilon}(x)\nu_1(x) \ dx.
\end{align*}
Moreover, as $\left|k_y\ast \eta_{\epsilon}(x) \nu^*(x)\right|\leq|k_y|\ast \eta_{\epsilon}$ and $|k_y|\ast \eta_{\epsilon}\stackrel{L^1({\overline{O}})}{\to} |k_y|$, by generalized dominated convergence theorem,  $$\lim \limits_{\epsilon\to 0}\int_{\overline{O}} k_y\ast \eta_{\epsilon}(x) \nu^*(x) \ dx=\int_{\overline{O}}k_y (x) \nu^*(x) \ dx= \int_{\overline{O}} k(y-x) \nu^*(x) \ dx=U^{\nu^*}(y).$$

Similarly, $$\lim \limits_{\epsilon\to 0}\int_{\overline{O}}k_y\ast \eta_{\epsilon}(x) \nu_1(x) \ dx=\int_{\overline{O}}k_y (x) \nu_1(x) \ dx=\int_{\overline{O}} k(y-x)\nu_1(x) \ dx =U^{\nu_1}(y).$$

Hence, $ U^{\nu^*}(y)\leq U^{\nu_1}(y)$ for all $y\in \mathbb{R}^d$, thereby  $\nu_{1}\leq_{SH,O}\nu^*$ by Lemma~\ref{lemma:potentials} and $\nu^*$ is the unique maximal element of $\mathcal{A}_{\mu,O}$.
\end{proof}

\section{Uniqueness In 1 dimension}\label{sec:main proof}
 In this section, we prove Theorems~\ref{thm:main} and \ref{thm:maximal}.

 We first explain the idea:
In 1D, any open set can be decomposed into  $$ O=\bigcup \limits_{n=1}^{\infty} (c_n,d_n),$$ where $(c_n,d_n)$ are mutually disjoint.  Let  $\mu=\sum \limits_{n=1}^{\infty} \mu_n$ with $supp(\mu_n)\subset [c_n,d_n]$  be given.
 Note that the constraint $\tau \le \tau^O$ for the optimization $P(\mu,u,O)$ does not allow the Brownian motion moves between different intervals, therefore the optimal solution to $P(\mu,u,O)$ itself decomposes the sum of optimal solutions $P(\mu_n,u_n,(c_n,d_n))$, where $\mu_n, u_n$ are the restrictions of $\mu$ and $u$ on $(c_n, d_n)$. 

We will prove that  
our optimal solution to $P(\mu,u,O)$ is given as
\begin{multline}\label{shape}
  \nu^*=\sum \limits_{n=1}^{\infty} \nu_n^*,\hbox{ \  where \ } \nu^*_n=\chi_{A_n} \hbox{ \ and \ }A_n=\left(c_n,e_n\right)\bigcup \left(f_n, d_n\right)\subset (c_n,d_n).\\
  \hbox{ \ such that  for each $n$, $\mu_n,\nu_n^* $  have same mass and first moment. \ } 
\end{multline}
 Here the point is that the same mass and the first moment condition and the specific form $A_n$ completely determines $\nu_n$ from $\mu_n$, thus the expression \eqref{shape} determines $\nu^*$ uniquely from $\mu$. 
In particular, when $O=(-1,1)$,
\begin{align}\label{eqn:A}
A=\left(-1,-1+\frac{k}{2}-\frac{\beta}{2-k}\right)\bigcup \left(1-\frac{k}{2}-\frac{\beta}{2-k}, 1\right).
\end{align}
The shape of $A$ comes from the following  reason: In \cite{GeneralDimensions}, it is shown that to minimize $\int u(x) \nu(x) \ dx$, we have to maximize $\tau$ as much as possible. With $\nu^*=\chi_{A}$, for each $n$, the particles should be saturated in intervals around the boundary points $c_n$ and $d_n$. The other endpoints of $\nu^*_n$ come from the property that the Brownian motion with $\mathbb{E}(\tau)<\infty$ preserves mass $k$ and first moment $\beta$. 
 Given $k$ and $\beta$, such endpoints are uniquely determined.

 We will first show the following theorem:
\begin{theorem}\label{thm:potentials}
   Let $O\subset \mathbb{R}$ be a bounded open set $O=\bigcup \limits_{n=1}^{\infty} (c_n,d_n)$.
   Let $\mu$ be an initial distribution of the form $\mu=\chi_E$ for a measurable set $E$ 
   and $\nu^*$ is given by $(\ref{shape})$. 
   Then we have  $\nu^*\in \mathcal{A}_{\mu,O}$ .
\end{theorem}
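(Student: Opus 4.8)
The plan is to exploit the one-dimensional decomposition $O=\bigcup_n (c_n,d_n)$ to reduce the global claim to a single interval, establish membership there through the potential characterization of Lemma~\ref{lemma:potentials}, and then reassemble the resulting stopping times. First I would set up the reduction. Write $\mu=\sum_n \mu_n$ and $\nu^*=\sum_n\nu_n^*$ with $\mu_n=\mu|_{(c_n,d_n)}=\chi_{E\cap(c_n,d_n)}$ and $\nu_n^*=\chi_{A_n}$ as in \eqref{shape}. Since the $(c_n,d_n)$ are the maximal open components of $O$, each endpoint $c_n,d_n$ lies in $O^c$; hence a Brownian path started in $(c_n,d_n)$ cannot reach another component before exiting $O$, and in fact $\tau^{(c_n,d_n)}=\tau^O$ on $\{W_0\in(c_n,d_n)\}$. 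It therefore suffices to produce, for each $n$, a stopping time $\tau_n\le\tau^{(c_n,d_n)}$ with $\mathbb{E}(\tau_n)<\infty$, $W_0\sim\mu_n$ and $W_{\tau_n}\sim\nu_n^*$, i.e. to show $\mu_n\le_{SH,(c_n,d_n)}\nu_n^*$.

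On a fixed interval I would invoke Lemma~\ref{lemma:potentials}: $(c_n,d_n)$ is convex, $\tau^{(c_n,d_n)}=\tau^{[c_n,d_n]}$ almost surely, both $\mu_n,\nu_n^*$ have densities bounded by $1$ and are supported on $[c_n,d_n]$, and by the definition of \eqref{shape} they share mass and first moment (such $A_n$ exists since $k_n=|E\cap(c_n,d_n)|\le d_n-c_n$ leaves room $f_n-e_n=(d_n-c_n)-k_n\ge 0$, and the first moment of the two-sided block $A_n$ sweeps the full attainable range). The lemma then reduces the claim to the single inequality $U^{\mu_n}\ge U^{\nu_n^*}$ on $\mathbb{R}$.

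The potential inequality is the heart of the argument. Set $g=U^{\mu_n}-U^{\nu_n^*}$; then $g\in C^1(\mathbb{R})$, $g''=\nu_n^*-\mu_n$ a.e. (since $(U^\zeta)''=-\zeta$ in $1$D), and because the two measures have equal mass and first moment $g$ vanishes off $[c_n,d_n]$, so $g(c_n)=g(d_n)=0$ and $g'(c_n)=g'(d_n)=0$. Writing $G=g'$, so that $G(x)=\int_{c_n}^x(\nu_n^*-\mu_n)\,dt$, and using $A_n=(c_n,e_n)\cup(f_n,d_n)$ together with $\mu_n\le 1$: the integrand equals $1-\mu_n\ge 0$ on $(c_n,e_n)$ and $(f_n,d_n)$, and $-\mu_n\le 0$ on $(e_n,f_n)$. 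Hence $G$ is nondecreasing on $(c_n,e_n)$ (so $G\ge 0$ there from $G(c_n)=0$), nonincreasing on $(e_n,f_n)$, and nondecreasing on $(f_n,d_n)$ (so $G\le 0$ there from $G(d_n)=0$); consequently $G$ changes sign exactly once, from $\ge 0$ to $\le 0$, at some $x_0\in[e_n,f_n]$. Integrating $g'=G$ with the boundary values $g(c_n)=g(d_n)=0$ then forces $g\ge 0$ on $[c_n,d_n]$ ($g$ rises from $0$ up to $x_0$, then falls back to $0$), and $g=0$ off $[c_n,d_n]$, yielding $U^{\mu_n}\ge U^{\nu_n^*}$ everywhere, whence $\mu_n\le_{SH,(c_n,d_n)}\nu_n^*$.

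Finally I would assemble the pieces. Each $\tau_n$ supplied by Lemma~\ref{lemma:potentials} satisfies $\tau_n\le\tau^{(c_n,d_n)}=\tau^O$; defining $\tau=\tau_n$ on $\{W_0\in(c_n,d_n)\}$ (a partition of $\operatorname{supp}\mu$ up to a null set) gives a randomized stopping time with $\tau\le\tau^O$, $W_0\sim\mu$ and $W_\tau\sim\sum_n\nu_n^*=\nu^*$; since $O$ is bounded, $\mathbb{E}(\tau)\le\mathbb{E}(\tau^O)<\infty$. Combined with $\nu^*=\chi_{\bigcup_n A_n}\le\chi_O$, this shows $\mu\le_{SH,O}\nu^*$ and $\nu^*\le\chi_O$, i.e. $\nu^*\in\mathcal{A}_{\mu,O}$. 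The main obstacle I anticipate is this gluing step: checking that the component-wise stopping times combine into one adapted randomized stopping time with $\tau\le\tau^O$ and controlling the expectation across countably many components. Once the confinement identity $\tau^{(c_n,d_n)}=\tau^O$ is in hand the construction is routine, but it is the point where measure-theoretic care is genuinely required.
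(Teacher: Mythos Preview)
Your proof is correct and considerably more streamlined than the paper's. The paper proceeds through four stages: first it handles $O=(-1,1)$ with $\mu=\chi_{(a,b)}$ a single interval by explicitly computing $[U^{\mu}]'$ and $[U^{\nu^*}]'$ and then running a nine-case analysis (relegated to an appendix) to locate the unique interior critical point of $U^{\nu^*}-U^{\mu}$; second it passes to finite unions $\mu=\sum_{i=1}^n\chi_{(a_i,b_i)}$ by an iterative stopping-time construction that sweeps mass outward one interval at a time; third it approximates a general measurable $E$ by such finite unions and passes to the limit in the potentials; and only then, fourth, does it decompose a general $O$ into its components and glue the stopping times, exactly as you do.

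Your argument collapses the first three stages into a single sign-pattern observation: because $\nu_n^*=\chi_{A_n}$ equals $1$ on the two outer blocks $(c_n,e_n)\cup(f_n,d_n)$ and $0$ in between, while $\mu_n=\chi_{E\cap(c_n,d_n)}\le 1$ everywhere, the second derivative $g''=\nu_n^*-\mu_n$ has the pattern $(\ge 0,\le 0,\ge 0)$; together with the four boundary conditions $g(c_n)=g(d_n)=g'(c_n)=g'(d_n)=0$ coming from equal mass and first moment, this forces $g'\ge 0$ then $g'\le 0$, hence $g\ge 0$. This works immediately for arbitrary measurable $E$, with no explicit formulas, no induction, and no approximation. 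The paper's route is more hands-on and its Step~2 gives a concrete transport picture (Figure~1) that may have independent appeal, but your convexity argument is both shorter and more transparent. The gluing step you flag as the main concern is exactly the paper's Step~4 and is handled identically; the bound $\mathbb{E}(\tau)\le\mathbb{E}(\tau^O)<\infty$ for bounded $O$ that you invoke is a clean way to control the expectation uniformly across the countably many components.
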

After proving this we will see in the next subsection that the optimal solution of \eqref{primal} is given by \eqref{shape}.

\begin{proof}[
Proof of Theorem~\ref{thm:potentials}]  
We proceed in the following steps:
\begin{enumerate}[label=\textbf{Step  \arabic*.},start=1]

\item  For $O=(-1,1)$, show that for $\nu^*=\chi_A$ with $A$ given in \eqref{eqn:A} it satisfies $\nu^*\in \mathcal{A}_{\mu,O}$ when $\mu=\chi_{(a,b)}$, where $-1<a<b<1$.\\
For general $O=(c,d)$ and $\mu=\chi_{(a,b)}$ such that $c<a<b<d$, by shifting and rescaling the domain, similar proof can be applied.

\item For $O=(-1,1)$, show that for $\nu^*=\chi_A$ with $A$ given in \eqref{eqn:A} it satisfies $\nu^*\in \mathcal{A}_{\mu,O}$ when $\mu=\sum \limits_{k=1}^n \chi_{(a_i,b_i)}$, where $(a_i,b_i)\subset(-1,1)$,  $\min \limits_{k} a_k>-1$ and $\max \limits_k b_k<1$.
\item For $O=(-1,1)$, show that   for $\nu^*=\chi_A$ with $A$ given in \eqref{eqn:A} it satisfies  $\nu^*\in \mathcal{A}_{\mu,O}$ when  $\mu=\chi_{E}$, where $E$  is a bounded measurable set.
\item For bounded open set $O\subset \mathbb{R}$, show that for $\nu^*$  given in \eqref{shape} it satisfies $\nu^*\in \mathcal{A}_{\mu,O}$ when $\mu=\chi_{E}$, where $E$ is a bounded measurable set.
\end{enumerate}

We will prove each of these steps below.
    \begin{enumerate}[label=\textbf{Step  \arabic*.},start=1]

\item  For $O=(-1,1)$, show that for $\nu^*=\chi_A$ with $A$ given in \eqref{eqn:A} it satisfies  $\nu^*\in \mathcal{A}_{\mu,O}$ when $\mu=\chi_{(a,b)}$, where $-1<a<b<1$.
\begin{proof}
 From Lemma~\ref{lemma:potentials}, it suffices to show  that $U^{\nu^*}\leq U^{\mu}$. As $\mu$ and $\nu^*$ have the same mass and the first moment, from Case 2 of the proof of 
Lemma~\ref{lemma:potentials},
$U^{\nu^*}(y)=U^{\mu}(y)$ for all $|y|\geq 1$. 
We now prove that  $U^{\nu^*}(y)\leq U^{\mu}(y)$ for all $|y|\leq 1$.

     For $\mu=\chi_{(a,b)}$ having mass $k$ and first moment $\beta$, by solving simultaneous equations,  we have $a=\dfrac{\beta}{k}-\dfrac{k}{2}$ and $b=\dfrac{\beta}{k}+\dfrac{k}{2}$.  To compare $U^{\nu^*}$ and $U^\mu$,
     note from direct computation, that  
\begin{align*}
    [U^{\nu^*}(y)]'=\begin{cases}
        -\dfrac{k}{2} &\text{for $y\geq 1$},\\
        -y+1-\dfrac{k}{2}&\text{for $1-\dfrac{k}{2}-\dfrac{\beta}{2-k}\leq y\leq1$},  \\ 
        \dfrac{\beta}{2-k}  &\text{for $-1+\dfrac{k}{2}-\dfrac{\beta}{2-k}\leq y\leq 1-\dfrac{k}{2}-\dfrac{\beta}{2-k}$},\\
     -y-1+\dfrac{k}{2}  &\text{for $-1 \leq y\leq -1+\dfrac{k}{2}-\dfrac{\beta}{2-k}$}, \\ 
       \dfrac{k}{2} &\text{for $y\leq -1$},   
    \end{cases}
\end{align*}
and 
\begin{align*}
    [U^{\mu}(y)]'&=\begin{cases}
     -\dfrac{k}{2} &\text{for } y\geq \dfrac{\beta}{k}+\dfrac{k}{2},\\
     -y+\dfrac{\beta }{k} &\text{for }  \dfrac{\beta}{k}-\dfrac{k}{2} \leq y \leq \dfrac{\beta}{k}+\dfrac{k}{2},\\
      \dfrac{k}{2} &\text{for } y\leq \dfrac{\beta}{k}-\dfrac{k}{2}.
    \end{cases}
\end{align*}

The following is to show $U^{\nu^*}(y)\leq U^{\mu}(y)$ for all $|y|\leq 1$:
\begin{proof} 
Notice $U^{\nu^*}-U^{\mu}\in C^1([-1,1])$.  Also, with $U^{\nu^*}(-1)-U^{\mu}(-1)=U^{\nu^*}(1)-U^{\mu}(1)=0$, and existence of $d=\min\{-1+\frac{k}{2}-\frac{\beta}{2-k},\frac{\beta}{k}-\frac{k}{2}\}>-1$ such that  $[U^{\nu^*}]'(s)-[U^{\mu}]'(s)<0$ on $(-1, d)$, we have $$\arg\min \limits_{y\in [-1,1]}\{ U^{\nu^*}(y)-U^{\mu}(y)\}\subset (-1,1) \text{ and } \min \limits_{y\in [-1,1]}\{ U^{\nu^*}(y)-U^{\mu}(y)\}<0.$$
So, a minimum point is a critical point $s_0 \in (-1,1)$ that $[U^{\nu^*}]'(s_0)-[U^{\mu}]'(s_0)=0$.
By computations, we can verify that there is unique critical point on $(-1,1)$; see the Appendix~\ref{sec:appendix} for details. 

Therefore, $$s_0=\arg\min \limits_{y\in [-1,1]}\{ U^{\nu^*}(y)-U^{\mu}(y)\},$$ and $$\max \limits_{y\in [-1,1]}\{ U^{\nu^*}(y)-U^{\mu}(y)\}=U^{\nu^*}(1)-U^{\mu}(1)=0.$$ To conclude,  $U^{\nu^*}(y)\leq U^{\mu}(y)$ for all $|y|\leq 1$. 
\end{proof}
\end{proof}

\item   For $O=(-1,1)$, show that  for $\nu^*=\chi_A$ with $A$ given in \eqref{eqn:A} it satisfies $\nu^*\in \mathcal{A}_{\mu,O}$ when $\mu=\sum \limits_{k=1}^n \chi_{(a_i,b_i)}$, where $(a_i,b_i)\subset(-1,1)$,  $\min \limits_{k} a_k>-1$ and $\max \limits_k b_k<1$.

\begin{proof}
 First consider the case where $\mu=\mu_1+\mu_2$, where $\mu_1= \chi_{(a,b)}$, $\mu_2= \chi_{(c,d)}$ and $-1<a<b<c<d<1$, and the remaining cases can be proved by induction.

 Consider $\nu_1^*=\chi_{(-1,a')\cup (b',c)}$ 
obtained from Step 1 with the interval $O=(-1, c)$ and the initial measure $\mu_1$. Then from  Step 1 applied to $\mu_1$, $\nu_1^*$, there exists a stopping time $\tau_1\leq \tau^{(-1,c)}\leq \tau^{(-1,1)}$ with $\mathbb{E}(\tau_1)<\infty$ such that $W_0\sim \mu_1$ and $W_{\tau_1}\sim \nu^*_1$. Now consider $I'=(b',d)$,   and let $\mu_2'= \chi_{I'}$.
Apply Step 1 to the underlying domain $O=(a',1)$, and get the resulting  $\nu^*_2$ in the form described in \eqref{shape}; see Figure 1. Then from Step 1, there exists a stopping time $\tau_2\leq \tau^{(a',1)}\leq \tau^{(-1,1)}$ with $\mathbb{E}(\tau_2)<\infty$ such that $W_0\sim \mu_2'$ and $W_{\tau}\sim \nu^*_2$.

Note that $\nu^*=\chi_{(-1,a')} +\nu^*_2$ is of the form $(\ref{shape})$ as $\nu^*=\chi_{(-1, a'')} $  and $\nu^*$, $\mu$ have the  same mass and the first moment. Let $\tau$ be the stopping time, $\tau=\tau_1\oplus \tau_2\leq \tau^{(-1,1)}$, where the direct sum $\oplus$ is understood as follows:
First apply the stopping time $\tau_1$ to the initial distribution $\mu_1$ and the zero stopping time $0$ to the other distribution $\mu_2$. Then, apply zero stopping time to the distribution $\chi_{(-1,a')}$ and $\tau_2$ to $\mu_2'$. Notice that for $\tau$,  $W_0\sim \mu$ and $W_{\tau}\sim \nu^*$ and  $\mathbb{E}(\tau)<\infty$. Hence $\nu^*\in \mathcal{A}_{\mu,O}$.

 Now for the general case 
$\mu=\sum \limits_{k=1}^n \chi_{(a_i,b_i)}$, we can apply the above argument inductively, starting from the three most left sub-intervals $(a_1,b_1)$, $(a_2, b_2)$, $(a_3, b_3)$, applying the procedure in the previous  paragraphs for 
$O=(-1, a_3)$ and 
$\mu_1= \chi_{(a_1, b_2)}$, 
$\mu_2=\chi_{(a_2, b_2)}$. After this procedure we get the new distribution of the form:
\begin{align*}
\mu'= \chi_{(-1, a_2')}+ \chi_{(a_3', b_3)} + \chi_{(a_4, b_4)} + \cdots.    
\end{align*} 
We then apply the previous procedure again and keep continuing until to get the final measure of the form
\begin{align*}
    \chi_{(-1, \bar a ) } + \chi_{(\bar b, 1)}.
\end{align*}
In this procedure, the total mass and the first moment does not change and this final form is given by \eqref{shape}.

\begin{figure}[h]\label{fig1}
\caption{1D diffusion of 2 intervals}
\centering
\includegraphics[scale=0.3]{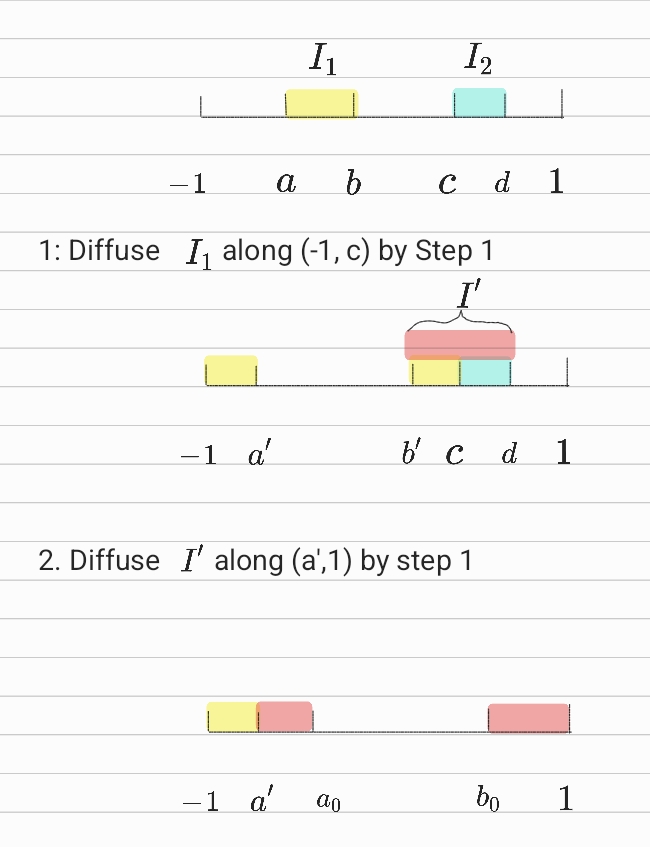}
\end{figure}
\end{proof}

\item  For $O=(-1,1)$, show that  for $\nu^*=\chi_A$ with $A$ given in \eqref{eqn:A} it satisfies $\nu^*\in \mathcal{A}_{\mu,O}$ when  $\mu=\chi_{E
}$,  where $E$ is a bounded measurable set.
\begin{proof}
 By outer regularity of the Lebesgue measure $m$ \cite[Theorem 1.18]{analysis} and the fact that every bounded open set in 1D is countable disjoint union of bounded open intervals, there is a sequence of sets $\{\bigcup \limits_{k=1}^{N_n} (c_{n_k},d_{n_k})\}_{n=1}^{\infty}$ such that 
 \begin{itemize}
     \item  $E_n=\bigcup \limits_{k=1}^{N_n} (c_{n_k},d_{n_k})$ is a finite union of disjoint open intervals and $-1<c_{n_k}<d_{n_k}<c_{n_{k+1}}<1$ for all $n,n_k$, and
     \item $\lim \limits_{n\to \infty} m(E_n)= m(E)$.
 \end{itemize}
Moreover, as $\mu_n=\chi_{E_n}$ converges to
$\mu=\chi_E$ in $L^1((-1,1))$, and the function $1,x,-\dfrac{1}{2}|x-y|$ are bounded on $(-1,1)$ for all $y\in \mathbb{R}$, we see by the dominated convergence theorem, convergence of the mass, the first moment and the potential of $\mu_n$ can be obtained, that is,
$\lim \limits_{n\to \infty}k_n=k$, $\lim \limits_{n\to \infty}\beta_n=\beta$, $\lim \limits_{n\to \infty} U^{\mu_n}(y)=U^{\mu}(y)$. Also, for $$\nu_n^* = \chi_{(-1, \bar a_n)} + \chi_{(\bar b_n, 1)}$$ given from Step 2 with $O=(-1,1)$ and the initial measure  $\mu_n$, we see that $\nu_n^*$ converges to $\nu^*$ in $L^1((-1,1))$. This is because those measures $\nu_n^*$ and $\nu^*$ are determined by $k_n, \beta_n$, $k, \beta$, respectively, in the form \eqref{shape}. Therefore, 
 $\lim \limits_{n\to \infty} U^{\nu_n}(y)=U^{\nu}(y)$ for all $y\in \mathbb{R}$. From Step 2 and Lemma~\ref{lemma:potentials}, $U^{\nu^*_n}\leq U^{\mu_n}$ on $\mathbb{R}$. Consequently, 
\begin{align*}
   U^{\nu^*}(y)=\lim \limits_{n\to \infty} U^{\nu^*_n}(y) \leq   \lim \limits_{n\to \infty} U^{\mu_n}(y)= U^{\mu}(y),
\end{align*}   
and $\nu^*\in \mathcal{A}_{\mu,O}$.
\end{proof}

\item For a bounded open set $O\subset \mathbb{R}$, show that   for $\nu^*$  given in \eqref{shape} it satisfies $\nu^*\in \mathcal{A}_{\mu,O}$ when $\mu=\chi_{E}$, where $E$ is a bounded measurable set.
\begin{proof}
 Note that $O=\bigcup \limits_{i=1}^{\infty} (a_i,b_i)$, for some mutually disjoint open sets $\{(a_i,b_i)\}_{i=1}^{\infty}$.

 For $\mu=\sum \limits_{i=1}^{\infty}\mu_i$
  with $\mu_i = \mu\big|_{(a_i, b_i)}$ we apply Step 3, with $O=(a_i, b_i)$, then for each $\mu_i$ and the corresponding $\nu_i^*$ described in (\ref{shape}), by Step 3, there exists a stopping time $\tau_i\leq \tau^{(a_i,b_i)}\leq\tau^O$ with $\mathbb{E}(\tau_i)<\infty$ such that $W_0\sim \mu_i$ and $W_{\tau_i}\sim \nu^*_i$.
  
 Take $\tau=\bigoplus \limits_{i=1}^{\infty} \tau_i$  where this $\oplus$ is understood as that $\tau=\tau_i$ when it is applied to each part $\mu_i$ of the initial distribution $\mu$.  Note that $\tau\leq \tau^O$ (hence $\mathbb{E}(\tau)<\infty$) and  since $(a_i,b_i)$'s are mutually disjoint, we have $W_0\sim \mu$, $W_{\tau}\sim \sum \limits_{i=1}^{\infty} \nu^*_i=\nu^*$. Therefore $\nu^*\in \mathcal{A}_{\mu,O}$.
\end{proof}
\end{enumerate}
\end{proof}

\subsection{Proof of Theorems~\ref{thm:main} and ~\ref{thm:maximal}}\label{sec:proof}
 Given $\mu$ satisfying  (\ref{($C_0$)}), 
notice that as mentioned in $(\ref{primal shape})$, any optimal solution $\gamma$ to $P(\mu,u,O)$  
  is of the form $\gamma=\chi_{E}$. 
  Consider $\nu^*$ given in Theorem~\ref{thm:potentials} (taking $\mu=\gamma$ there); thus $\gamma \leq_{SH,O} \nu^*$; since $\mu\le_{SH,O}\gamma$, this also implies $\nu^* \in\mathcal{A}_{\mu,O}$. 

  Since $P(\mu,u,O)=\int u d\gamma \ge \int u  d\nu^*$, and for fixed $u$, optimizer $\gamma$ of $P(\mu,u,O)$ is unique (\cite[Corollary 6.6]{GeneralDimensions}), this means that $\nu^* =\gamma$. This proves that $(2)$ of Corollary~\ref{cor:unique} holds. Therefore, Theorem~\ref{thm:main} holds. Also, $(\mathcal{A}_{\mu,O},\leq_{SH,O})$ has a unique maximal element $\nu^*$. Our proof shows that the optimal solution is of the form \eqref{shape} since it is from Theorem~\ref{thm:potentials}.

\begin{proof}[Proof of Theorem~\ref{thm:maximal}] 
    Let $\eta$ be a maximal solution of (\ref{$S_t$}) with initial data $(\mu=\eta_0,O\stackrel{\text{a.e.}}{=} \{\eta_0>0\})$,  $\nu=\chi_{\Sigma(\eta)}$. Then $\nu\in \mathcal{A}_{\mu,O}$ due to \cite[Theorem 7.2]{GeneralDimensions}.  Let $\eta^*$ be the (weak) solution of (\ref{$S_t$})
    generated by $\nu^*$, as given in \cite[Theorem 7.4]{GeneralDimensions}, with the same initial condition;  it holds $\nu^*=\chi_{\Sigma(\eta^*)}$. Due to Theorem~\ref{thm:potentials},  $\chi_{\Sigma(\eta)}=\nu\leq_{SH} \nu^*=\chi_{\Sigma(\eta^*)}$, therefore $\eta=\eta^*$ by maximality of $\eta$. As a result,  $\eta^*$ is the unique maximal weak solution of (\ref{$S_t$}).
\end{proof}

\section{Stability}\label{sec:stability}
In \cite[Section 7]{freetarget}, it is shown that for initial distributions $\mu_1\leq \mu_2$, if $\nu_1,\nu_2$ both solve the optimal free target problem in their setting, then $\nu_1\leq \nu_2$. This monotonicity then yields $L^1$ contraction principle: $\|(\nu_1-\nu_2)_+\|_{L^1}\leq \|(\mu_1-\mu_2)_+\|_{L^1}$.
 Such monotonicity is from the feature that in their setting  \cite{freetarget} the optimizer $\nu$  minimizes the corresponding stopping time $\tau$ as much as possible.
On the contrary, due to superharmonicity of $u$, the solutions to  \eqref{primal} maximize the stopping times. Indeed, we see below that the monotonicity and $L^1$-contraction property in  \cite[Section 7]{freetarget}  do not hold in our setting.
\subsection{Lack of monotonicity}

 We give the counter-examples in 1D, but, one can easily extend them as a $d$-dimensional result.
\begin{example}
   An easy  example is $\mu_1=\chi_{(-0.9,0)}$ and $\mu_2=\chi_{(-1,0)}$ on $O=(-1,1)$.  Here   $\mu_1\leq \mu_2$, but, $\nu_1\not \leq \nu_2$. 
  To see this, notice that for $\mu_2$ as the initial distribution, all Brownian particles confined in $O=(-1,1)$ are already saturating the density upper bound constraint  around the boundary point $-1$. Around this point, to satisfy $\tau\leq \tau^{(-1,1)}$, the particles cannot diffuse. Then, at any point $y> -1$ close $-1$, the mass cannot diffuse either from $\mu_2$, because, otherwise, it will increase the mass on the left of $y$, which is already saturated  by the density upper-bound constraint. Then one can apply this argument in a continuously inductive way with respect to $y$, to see that no mass can diffuse from $\mu_2$. As a result $\nu_2=\mu_2$.
  However, for $\mu_1$, the particles can diffuse both left and rightwards, and $\nu_1$ will have positive mass on $(0,1)$. Therefore $\nu_1 \not \le \nu_2$.  
\end{example}
     
 For a more nontrivial counter-example, we consider the case $\mu_1,\mu_2$ have the same first moment $\beta$. 

\begin{example}
     Take $\mu_1=0.99\chi_{(0,\sqrt{0.75})}$, $\mu_2=0.99\chi_{(-0.5,1)}$ and $O=(-1,1)$. Then $\mu_1,\mu_2$ satisfies ,
$\mu_1\le \mu_2$, and $\beta_1=\beta_2=0.37125$.  Then, as the optimal solutions $\nu_i$ of \eqref{primal} are given by \eqref{shape} as seen in the proof of Theorem~\ref{thm:main}, one can determine them as 
    \begin{itemize}
     \item $\nu_1=\chi_{A_1}$, where $A_1=\left(-1,-0.896224371 \right) \bigcup \left(0.246410478 ,1 \right)$.
     
        \item $\nu_2=\chi_{A_2}$, where $A_2=\left(-1,-0.978373786 \right) \bigcup \left(-0.463373786 ,1 \right)$.
       
    \end{itemize}
     Here $\mu_1\leq \mu_2$  but $\nu_1\not \leq \nu_2$.

\end{example}

\subsection{Lack of $L^1$-Lipschitz Stability}
 We show that there is no $L^1-$Lipschitz relationship between $\mu$ and $\nu$ in 1D.

\begin{lemma}
There is no $C>0$ such that for every $\nu_i$, the optimizer of \eqref{primal} with the initial distribution $\mu_i$, $i=1,2$, $\|(\nu_1-\nu_2)_+\|_{L^1}\leq C\|(\mu_1-\mu_2)_+\|_{L^1}.$
\end{lemma}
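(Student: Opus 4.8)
The plan is to derive the lemma from the failure of monotonicity established in the preceding examples, via a short argument by contradiction. The only structural input needed is Theorem~\ref{thm:main} (equivalently the explicit form \eqref{shape}): in $1$D the optimizer $\nu_i$ of \eqref{primal} with initial distribution $\mu_i$ is unique and completely determined by $\mu_i$ (through its mass and first moment). Hence ``the optimizer $\nu_i$'' is unambiguous, and the claimed inequality is a well-posed statement about the single-valued map $\mu\mapsto\nu$.

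First I would observe that any such $L^1$-Lipschitz bound would \emph{force} monotonicity of $\mu\mapsto\nu$. Indeed, suppose some $C>0$ satisfies $\|(\nu_1-\nu_2)_+\|_{L^1}\le C\|(\mu_1-\mu_2)_+\|_{L^1}$ for all admissible pairs, and apply it to any pair with $\mu_1\le\mu_2$ almost everywhere. Then $(\mu_1-\mu_2)_+\equiv 0$, so the right-hand side vanishes, which forces $\|(\nu_1-\nu_2)_+\|_{L^1}=0$, i.e. $\nu_1\le\nu_2$ a.e. Thus an $L^1$-Lipschitz estimate is strictly stronger than, and in particular implies, the monotonicity $\mu_1\le\mu_2\Rightarrow\nu_1\le\nu_2$.

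Next I would invoke the counterexample already produced in the previous subsection: $\mu_1=0.99\,\chi_{(0,\sqrt{0.75})}$ and $\mu_2=0.99\,\chi_{(-0.5,1)}$ on $O=(-1,1)$. These satisfy condition (\ref{($C_0$)}) precisely because their densities equal $0.99<1$, so one may take the intermediate measure to be $\nu=\mu_i$ itself, with $\mu_i\le_{SH,O}\mu_i\le 0.99\,\chi_O$ and $\delta=0.99$. Moreover $\mu_1\le\mu_2$ pointwise, so $(\mu_1-\mu_2)_+\equiv 0$, while the explicit optimizers $\nu_1=\chi_{A_1}$, $\nu_2=\chi_{A_2}$ satisfy $\lvert A_1\setminus A_2\rvert>0$: the left interval of $A_1$ reaches up to $-0.896$ whereas the left interval of $A_2$ stops at $-0.978$, so $A_1\setminus A_2$ contains $(-0.978,-0.896)$. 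Consequently $(\nu_1-\nu_2)_+=\chi_{A_1\setminus A_2}$ has $\|(\nu_1-\nu_2)_+\|_{L^1}>0$. For this single pair the right-hand side is $0$ while the left-hand side is strictly positive, contradicting the bound for \emph{every} $C>0$; hence no such $C$ exists.

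The argument has essentially no obstacle beyond confirming that the chosen pair genuinely lies inside the framework, namely that both $\mu_i$ satisfy (\ref{($C_0$)}) (handled by the strict density bound $0.99<1$) and that $(\nu_1-\nu_2)_+$ carries positive mass (read directly off the intervals $A_1,A_2$). If a quantitative statement is preferred over mere non-existence of $C$, the same mechanism upgrades to a blow-up family: perturb a monotonicity-violating pair by a small nonnegative density bump of size $\epsilon$ supported where $\mu_1=\mu_2$ (still keeping densities below $1$). Since $\nu$ depends continuously in $L^1$ on the mass and first moment away from the degenerate value $k=2$, the numerator $\|(\nu_1-\nu_2)_+\|_{L^1}$ stays bounded below while the denominator is $O(\epsilon)$, so the ratio tends to $\infty$. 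For the stated lemma, however, the single monotonicity-violating pair already suffices.
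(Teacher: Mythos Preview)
Your argument is correct and is in fact more economical than the paper's own proof. The paper builds an explicit one-parameter family $\mu_1=r\chi_{(-x,x)}$, $\mu_2=\chi_{(-c,-c+ry)}+r\chi_{(-x,x-y)}$, computes the optimizers $\nu_i=\chi_{A_i}$ via \eqref{shape}, and shows by direct calculation that the ratio $\|(\nu_1-\nu_2)_+\|_{L^1}/\|(\mu_1-\mu_2)_+\|_{L^1}$ tends to $+\infty$ as $y\downarrow 0$ and $c,x,r\uparrow 1$. You instead observe that any Lipschitz bound with constant $C$ immediately forces monotonicity (since $\mu_1\le\mu_2$ makes the right-hand side vanish), and then simply invoke the monotonicity counterexample from the preceding subsection, where the right-hand side is exactly zero while $\|(\nu_1-\nu_2)_+\|_{L^1}>0$. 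This defeats all $C>0$ at once with a single pair and no limiting procedure.

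What each approach buys: your route is shorter and makes transparent the logical relationship between the two stability notions (the $L^1$-Lipschitz property is strictly stronger than monotonicity, so the failure of the latter kills the former). The paper's explicit family, on the other hand, produces pairs with $\|(\mu_1-\mu_2)_+\|_{L^1}>0$ and still an arbitrarily large ratio, which is marginally more informative if one wanted to rule out a Lipschitz bound restricted to pairs with non-ordered initial data; it also exhibits the quantitative blow-up rate. Your final paragraph sketches how to recover this stronger statement by an $\epsilon$-perturbation of your counterexample, which is correct in spirit given the continuous dependence of $\nu^*$ on $(k,\beta)$ recorded in Lemma~\ref{lemma:weak convergence}.
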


\begin{proof}
 As we have seen from the proof of  Theorem~\ref{thm:main}
in Section~\ref{sec:proof}, $\nu_i$'s  are described in $(\ref{shape})$. Let $0<x<1$, $0<y\ll 1$, $0<r< 1$, $c\approx 1$ such that $-c+ry<-x$, and take $\mu_1=r\chi_{(-x,x)}$ and $\mu_2=\chi_{(-c,-c+ry)}+r\chi_{(-x,x-y)}$. Then from  \eqref{shape}, one can calculate
$\nu_i=\chi_{A_i}$, where
\begin{itemize}
    \item $A_1=(-1,-1+rx)\bigcup (1-rx,1)$ and 
    \item $A_2=(-1,-1+rx+\frac{2rxy+2cry-ry^2-r^2y^2}{2(2-2rx)} )\cup (1-rx+\frac{2rxy+2cry-ry^2-r^2y^2}{2(2-2rx)},1).$
\end{itemize}

When $y\approx  0$, $c\approx 1$, we will have the following picture:
\begin{figure}[h]
\caption{Shape of $\mu_1$, $\mu_2$}
\centering
\includegraphics[scale=0.21]{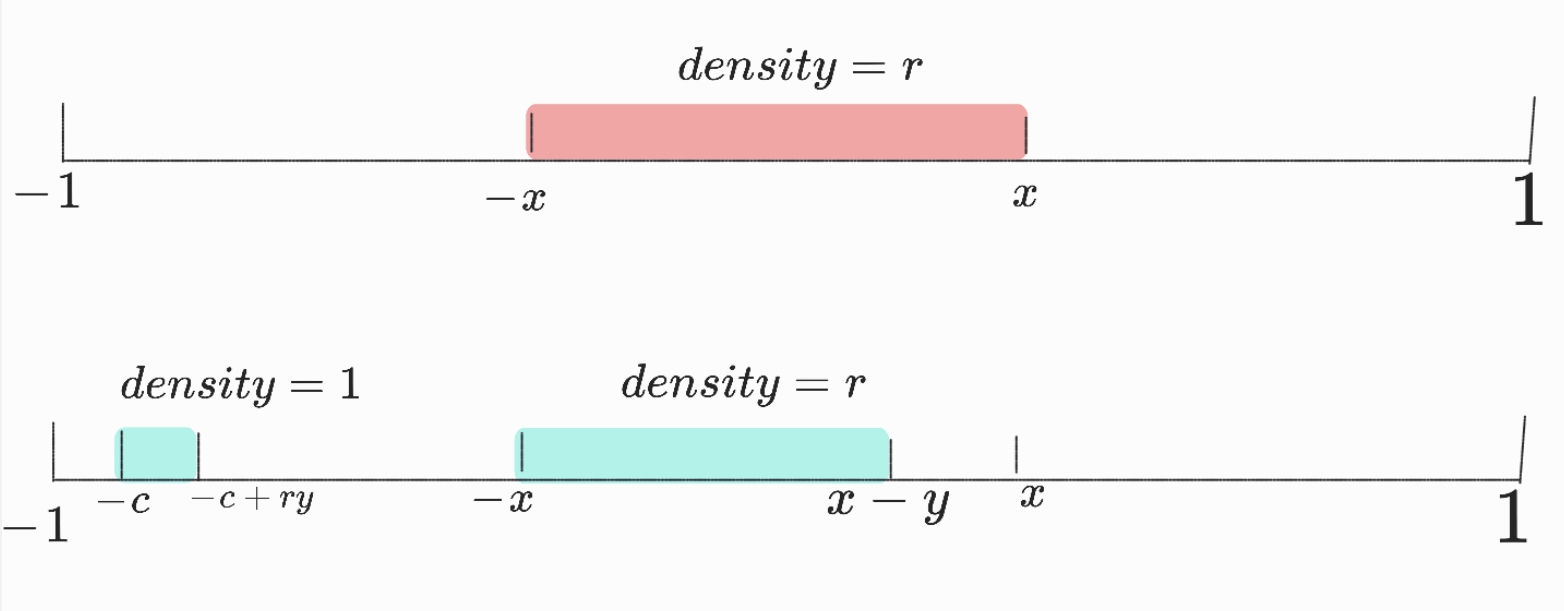}
\end{figure}
\begin{figure}[h]
\caption{Shape of $\nu_1$, $\nu_2$}
\centering
\includegraphics[scale=0.28]{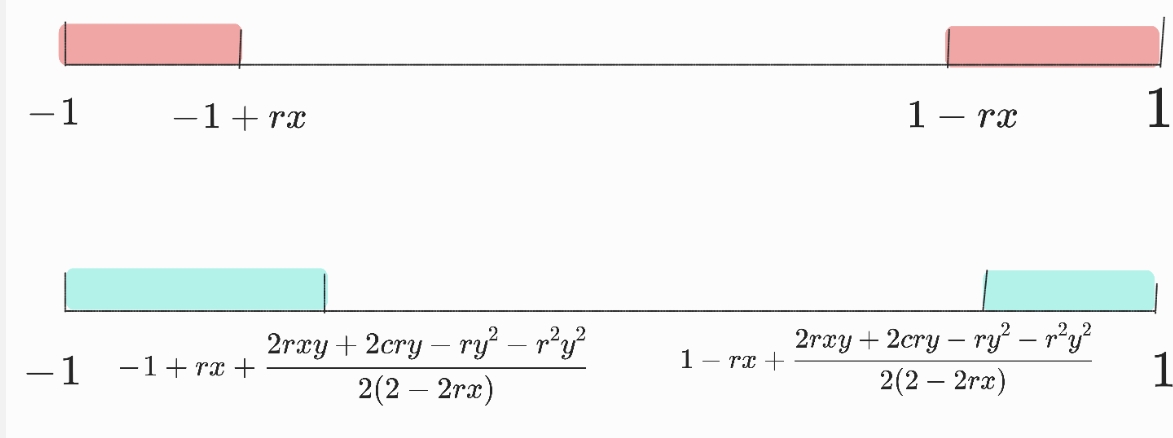}
\end{figure}

Then $\int|(\mu_1-\mu_2)_+|=ry$ and $\int|(\nu_1-\nu_2)_+|=\dfrac{2rxy+2cry-ry^2-r^2y^2}{2(2-2rx)}$. Suppose $C$ is a Lipschitz constant, we have 
$$C\geq \dfrac{2x+2c-y-ry}{4(1-rx)}\stackrel{y\downarrow 0}{\longrightarrow}   \dfrac{x+c}{2(1-rx)}\stackrel{c,x,r\uparrow 1}{\longrightarrow}   +\infty.$$

To conclude, such $C$ does not exist.
\end{proof}

\subsection{Stability in weak convergence of measures in 1 dimension}
In the meantime,  we still verify that the optimizers satisfy weak convergence with respect to the change of the initial distributions.
\begin{lemma}\label{lemma:weak convergence}
 In 1 dimension, if $\mu_l\rightharpoonup \mu$ weakly, then the densities of $\nu_l$ converges to the density of $\nu$ in $L^1(O)$, for the corresponding  optimizers of \eqref{primal}.
\end{lemma}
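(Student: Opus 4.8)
The plan is to exploit the structural fact, established in the proof of Theorem~\ref{thm:main}, that each optimizer is completely determined by just two scalars per component interval of $O$. Writing $O=\bigcup_n (c_n,d_n)$ as a disjoint union of open intervals, the restriction of $\mu_l$ (resp.\ $\mu$) to $(c_n,d_n)$ has some mass $k_n^{(l)}$ and first moment $\beta_n^{(l)}$ (resp.\ $k_n,\beta_n$); since the Brownian motion cannot cross between components, mass and first moment are preserved on each interval, so by \eqref{shape} the corresponding piece of the optimizer is $\chi_{A_n}$ with $A_n=(c_n,e_n)\cup(f_n,d_n)$, where $e_n,f_n$ solve the mass/first-moment equations. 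The whole problem thus reduces to: (i) deducing $(k_n^{(l)},\beta_n^{(l)})\to(k_n,\beta_n)$ from weak convergence, (ii) showing the endpoints, hence the per-interval density, depend continuously on $(k_n,\beta_n)$, and (iii) summing the per-interval $L^1$ errors over the countably many components.

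For step (i) I would invoke the mapping (portmanteau) theorem for weak convergence: since $\mu$ has a bounded density it assigns no mass to the endpoints $c_n,d_n$, so the bounded functions $\chi_{(c_n,d_n)}$ and $x\,\chi_{(c_n,d_n)}$ are continuous $\mu$-almost everywhere. Weak convergence then yields $k_n^{(l)}\to k_n$ and $\beta_n^{(l)}\to\beta_n$ for every fixed $n$, and likewise $k_l=\mu_l(O)\to\mu(O)=k$ for the total masses (testing against the constant function $1$). For step (ii), after rescaling $(c_n,d_n)$ to $(-1,1)$ the endpoints are given explicitly by \eqref{eqn:A} as rational expressions in $(k_n,\beta_n)$ whose only denominator is $|O_n|-k_n$ (interval length minus mass); condition~(\ref{($C_0$)}) forces $k_n\le\delta|O_n|$ with $\delta<1$, so $|O_n|-k_n\ge(1-\delta)|O_n|>0$ stays bounded away from $0$ and $e_n,f_n$ are continuous in $(k_n,\beta_n)$. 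Because
$$\left\|\chi_{(c_n,e)\cup(f,d_n)}-\chi_{(c_n,e')\cup(f',d_n)}\right\|_{L^1}=|e-e'|+|f-f'|,$$
this continuity gives $\|\nu_l-\nu\|_{L^1((c_n,d_n))}\to 0$ for each fixed $n$.

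I expect the main obstacle to be step (iii): each component's error tends to $0$, but there are infinitely many components, so termwise convergence does not by itself give $\|\nu_l-\nu\|_{L^1(O)}=\sum_n\|\nu_l-\nu\|_{L^1((c_n,d_n))}\to0$. I would close this with a generalized dominated convergence argument, viewing the sum over $n$ as an integral against the counting measure. The $n$th term is dominated by $k_n^{(l)}+k_n$ (both densities are indicators of subsets of $(c_n,d_n)$ of masses $k_n^{(l)}$ and $k_n$), and these dominators converge pointwise in $n$ (to $2k_n$) as well as in total sum, since $\sum_n\left(k_n^{(l)}+k_n\right)=k_l+k\to 2k=\sum_n 2k_n$ by step (i). The generalized dominated convergence theorem then applies and forces the sum of the (pointwise-vanishing) errors to $0$, giving $\|\nu_l-\nu\|_{L^1(O)}\to0$ and completing the proof.
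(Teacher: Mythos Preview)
Your argument is correct and shares the paper's core mechanism: the optimizer on each component interval is an explicit function of the mass and first moment via \eqref{eqn:A}, and weak convergence forces these two scalars to converge, hence the endpoints and the $L^1$ density converge. The paper, however, simply writes ``Without loss of generality, we can let $O=(-1,1)$'' and carries out the computation on a single interval; that shortcut is fine for finitely many components but leaves the countably-infinite case unjustified. Your treatment is strictly more complete on two points. First, you use the portmanteau theorem (noting $\mu$ has a density so the endpoints $c_n,d_n$ carry no mass) to obtain per-component convergence of $(k_n^{(l)},\beta_n^{(l)})$, whereas the paper only tests against the globally continuous functions $1$ and $x$. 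Second, and more substantively, you supply the tail control needed to pass from componentwise $L^1$ convergence to global $L^1$ convergence: bounding the $n$th error by $k_n^{(l)}+k_n$ and observing $\sum_n(k_n^{(l)}+k_n)=k_l+k\to 2k$ lets generalized dominated convergence over the counting measure finish the job. This summation step is precisely what the paper's ``WLOG'' glosses over, so your version buys genuine additional rigor at the cost of a slightly longer argument.
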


\begin{proof}
  Without loss of generality, we can let the underlying domain $O$ be $O=(-1,1)$.  From the proof of Theorem~\ref{thm:main}, the optimal solution to \eqref{primal} is given by \eqref{shape}, thus determined by the mass and the first moment. We have the following due to weak convergence: 
\begin{align*}
   \lim \limits_{l\to \infty} k_l= \lim \limits_{l\to \infty} \int_{-1}^1 1\mu_l(x) \ dx&=\int_{-1}^11\mu(x) \ dx=k, \text{ and}\\
  \lim \limits_{l\to \infty} \beta_l= \lim \limits_{l\to \infty} \int_{-1}^1 x\mu_l(x) \ dx&=\int_{-1}^1 x\mu(x) \ dx=\beta.
\end{align*}

Also, $\nu_l=\chi_{A_l}$, where $$A_l=\left(-1,-1+\frac{k_l}{2}-\frac{\beta_l}{2-k_l}\right)\bigcup \left(1-\frac{k_l}{2}-\frac{\beta_l}{2-k_l}, 1\right),$$ and $\nu=\chi_{A}$, where $A=\left(-1,-1+\frac{k}{2}-\frac{\beta}{2-k}\right)\bigcup \left(1-\frac{k}{2}-\frac{\beta}{2-k}, 1\right)$. With the limit above, the densities of $\nu_l$ converges to the density of $\nu$ in $L^1(O)$.
\end{proof}

\section{Appendix}
\subsection{Proof of unique stationary point on $(-1,1)$ in Step 1 of Theorem~\ref{thm:potentials}}\label{sec:appendix}
 We prove here that there is only one solution for $[U^{\nu^*}]'(s)-[U^{\mu}]'(s)=0$  on $(-1,1)$, namely, $s=\dfrac{2\beta(1-k)}{k(2-k)}$.

\begin{proof}
This is proved through computations. 
As we assume $-1<a<b<1$, $0<k< 2$ and \begin{align*}
        \int_{-1}^{-1+k} x \ dx<&\beta< \int_{1-k}^1 x \ dx,\\
    \dfrac{k^2}{2}-k < &\beta< k-\dfrac{k^2}{2}.
    \end{align*}  
There are nine possibilities for the position of stationary point $s\in(-1,1)$.
    \begin{enumerate}[label=\textbf{Case  \arabic*.},start=1]
    
\item $1-\dfrac{k}{2}-\dfrac{\beta}{2-k}\leq s\leq1$ and $s\geq \dfrac{\beta}{k}+\dfrac{k}{2}$.

Then $[U^{\nu^*}(s)]'-[U^{\mu}(s)]'=0$ implies $s=1$. But we assume $s\in (-1,1)$ only, so we can ignore this case.

\item $1-\dfrac{k}{2}-\dfrac{\beta}{2-k}\leq s\leq 1$ and $ \dfrac{\beta}{k}-\dfrac{k}{2}\leq s\leq \dfrac{\beta}{k}+\dfrac{k}{2}$.

Then $[U^{\nu^*}(s)]'-[U^{\mu}(s)]'=0$ implies $\beta=k-\dfrac{k^2}{2}$ 
But $\beta<k-\dfrac{k^2}{2}$, so this is impossible.

\item $1-\dfrac{k}{2}-\dfrac{\beta}{2-k}\leq s\leq1$ and $s\leq \dfrac{\beta}{k}-\dfrac{k}{2}$. Then
  \begin{align*}
   0\geq  1-\dfrac{k}{2}-\dfrac{\beta}{2-k}-\left[\dfrac{\beta}{k}-\dfrac{k}{2} \right]&=1-\dfrac{2\beta}{k(2-k)}\\
    &> 1-\dfrac{2\left(k-\dfrac{k^2}{2}\right)}{k(2-k)}=0
\end{align*}

Contradiction arises. So this case can be ignored.

\item $-1+\dfrac{k}{2}-\dfrac{\beta}{2-k}\leq s\leq 1-\dfrac{k}{2}-\dfrac{\beta}{2-k}$ and $s\geq \dfrac{\beta}{k}+\dfrac{k}{2}$.

Then $[U^{\nu^*}(s)]'-[U^{\mu}(s)]'=0$ implies $\beta=\dfrac{k^2}{2}-k$. But $\beta>\dfrac{k^2}{2}-k$, so this is impossible.

\item $-1+\dfrac{k}{2}-\dfrac{\beta}{2-k}\leq s\leq 1-\dfrac{k}{2}-\dfrac{\beta}{2-k}$ and $\dfrac{\beta}{k}-\dfrac{k}{2} \leq s \leq \dfrac{\beta}{k}+\dfrac{k}{2}$.

Then $[U^{\nu^*}(s)]'-[U^{\mu}(s)]'=0$ implies $ s=\dfrac{2\beta(1-k)}{k(2-k)}$.

\item $-1+\dfrac{k}{2}-\dfrac{\beta}{2-k}\leq s\leq 1-\dfrac{k}{2}-\dfrac{\beta}{2-k}$ and $s\leq \dfrac{\beta}{k}-\dfrac{k}{2}$.

Then $[U^{\nu^*}(s)]'-[U^{\mu}(s)]'=0$ implies $\beta=k-\dfrac{k^2}{2}$. But $\beta<k-\dfrac{k^2}{2}$, so this is impossible.

\item $-1\leq s\leq -1+\dfrac{k}{2}-\dfrac{\beta}{2-k}$ and $s\geq \dfrac{\beta}{k}+\dfrac{k}{2}$. Then \begin{align*}
  0\leq -1+\dfrac{k}{2}-\dfrac{\beta}{2-k}-\left[\dfrac{\beta}{k}+\dfrac{k}{2} \right]&=-1-\dfrac{2\beta}{k(2-k)}\\
    &< -1+\dfrac{2\left(k-\dfrac{k^2}{2}\right)}{k(2-k)}=0
\end{align*}
Contradiction arises. So this case can be ignored.

\item $-1\leq s \leq -1+\dfrac{k}{2}-\dfrac{\beta}{2-k}$ and $ \dfrac{\beta}{k}-\dfrac{k}{2}\leq s\leq \dfrac{\beta}{k}+\dfrac{k}{2}$.

Then $[U^{\nu^*}(s)]'-[U^{\mu}(s)]'=0$ implies $\beta=\dfrac{k^2}{2}-k$ . But $\beta>\dfrac{k^2}{2}-k$, so this is impossible.

\item $-1\leq s\leq -1+\dfrac{k}{2}-\dfrac{\beta}{2-k}$ and $s\leq \dfrac{\beta}{k}-\dfrac{k}{2}$. 

Then $[U^{\nu^*}(s)]'-[U^{\mu}(s)]'=0$ implies $s=-1$.
But we assume $s\in (-1,1)$ only, so this is impossible.
\end{enumerate}

From the above, we can conclude that there is only one solution for $[U^{\nu^*}]'(s)-[U^{\mu}]'(s)=0$  on $(-1,1)$, namely, $s=\dfrac{2\beta(1-k)}{k(2-k)}$.
\end{proof}

\newpage

\medskip

\bibliographystyle{plain}

\bibliography{bib}

\end{document}